\renewcommand{\thefootnote}%
{\fnsymbol{footnote}}
\theoremstyle{plain}
\newtheorem{Thm}
{Theorem}
\newtheorem{Lemm}[Thm]{Lemma}
\newtheorem*{Thm*}{Theorem}
\newtheorem*{Lemm*}{Lemma}
\newtheorem*{Propn*}{Proposition}
\newtheorem*{Cor*}{Corollary}
\newtheorem*{Res}{Resistance   Theorem}
\newtheorem*{Rado}{Rad\'o's Theorem}
\newtheorem*{Patch}{Patching Lemma}
\theoremstyle{remark}
\newtheorem*{Rmk}{\bf Remark}
\newtheorem*{Que}{\bf Question}
\newtheorem*{Defn}{\bf Definition}
\newcommand{\abs}[1]{\lvert#1\rvert}
\newcommand{\norm}[1]{\lVert#1\rVert}
\newcommand{\dliminf}{\displaystyle\liminf}
\DeclareMathSymbol
{\rightrightarrows}
{\mathrel}{AMSa}{"13}
\begin{document}
\author{Charles Pugh}
\author{Conan Wu}

\title{Jordan Curves and Funnel Sections}
\maketitle


{\hfill \today \hfill}

\section{Introduction}
\label{s.intro}
A continuous time dependent   vector ODE on $\mathbb{R}^{m}$
\begin{equation}
\label{e.ODE}
\begin{split}
y^{\prime} = f(t, y)  \qquad y(t_{0}) = y_{0} 
\end{split}
\end{equation}
can have many solutions with the same initial condition.  The simplest example is the time independent one dimensional ODE
$$
y^{\prime} = 2\abs{y}^{1/2} \quad \quad y(0) = 0
$$
whose uncountably many solutions with initial condition $y(0) = 0$ are
$$
y_{a,b}(t)  \, = \,
\begin{cases}
   -(t-a)^{2}   &  \textrm{ if } t < a
  \\
   0  &  \textrm{ if }  a \leq  t \leq b
  \\
   (t-b)^{2}  &  \textrm{ if }  b <  t
\end{cases}
$$
where $-\infty \leq a \leq  0 \leq  b \leq  \infty$.  The \textbf{solution funnel} of (\ref{e.ODE})   is
$$
F(t_{0}, y_{0}, f) = \{ (t,y(t)) : y(t)  \textrm{ solves the ODE $y^{\prime} = f(t, y)$ with $y(t_{0}) = y_{0}$}\} \ .
$$
It is the union of the graphs of the solutions with the given initial condition.  Its  cross-section at time $t_{1}$ is the \textbf{funnel section}
$$
K_{t_{1}}(t_{0}, y_{0}, f) = \{ y_{1}  : (t_{1}, y_{1})  \in F(t_{0}, y_{0}, f)\} \ .
$$
See Figure~\ref{f.section}.
\begin{figure}[htbp]
\centering
\includegraphics[scale=.65]{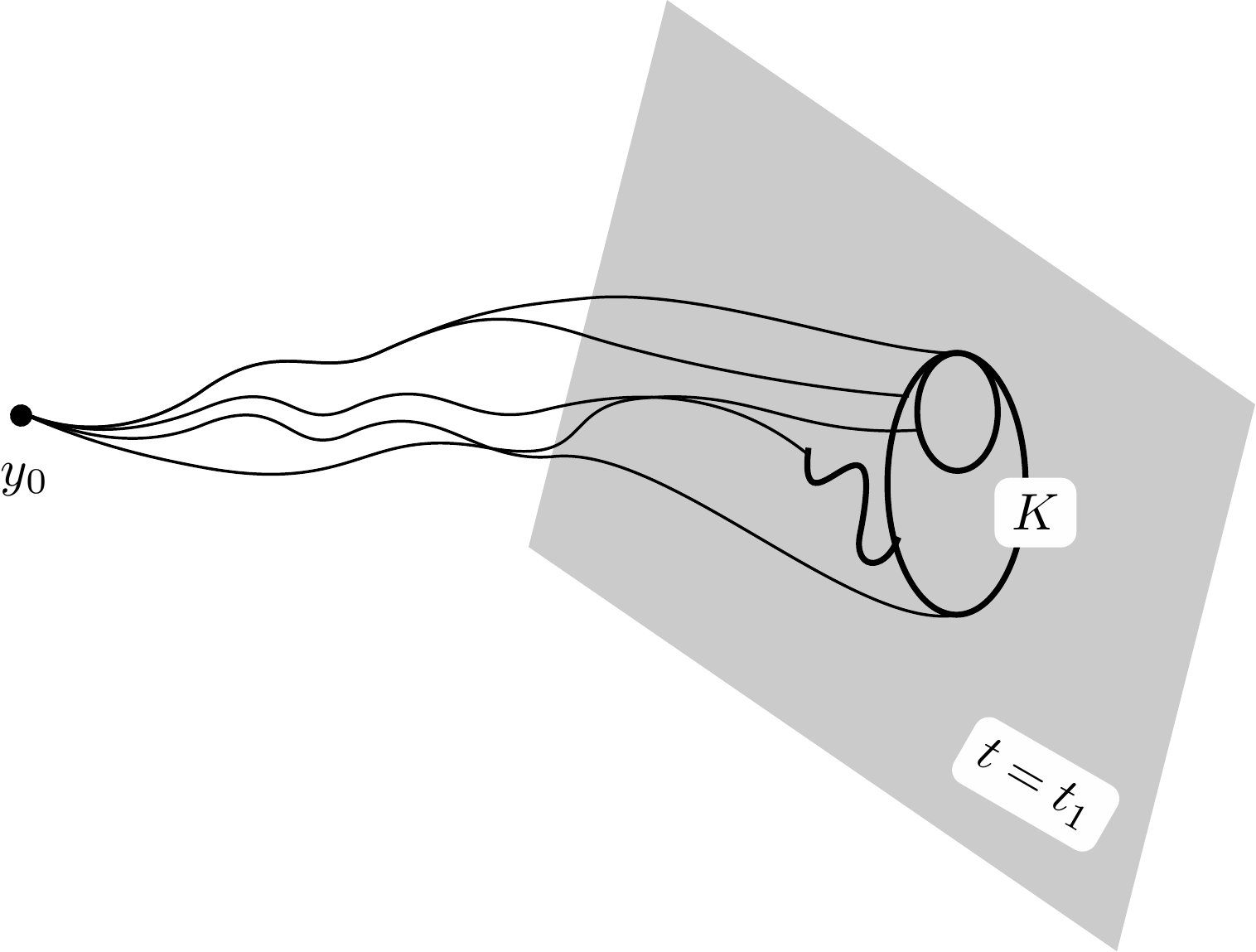}
\caption{The funnel of a two dimensional time dependent ODE   through $y_{0}$ and its cross-section $K$ in the time-$t_{1}$ plane.}
\label{f.section}
\end{figure}
The funnel section consists of the points $y_{1} \in \mathbb{R}^{m}$ that are accessible from $y_{0}$ by a solution   starting from $y_{0}$ at time $t_{0}$ and arriving at $y_{1}$ at time $t_{1}$.

The classical theorem about funnel sections  is due to H. Kneser.  It states that $K_{t_{1}}$ is a continuum (i.e., is nonempty, compact, and connected) if $f : \mathbb{R}^{m+1} \rightarrow  \mathbb{R}^{m}$   is continuous and has compact support.  See \cite{Kneser} and \cite{Pugh}.

In \cite{Pugh} the first author investigated the question: ``which continua are funnel sections?''  The main answers were
\begin{itemize}

\item[(a)]
The planar continuum consisting of an outward spiral and its limit circle is not a funnel section of any $m$-dimensional ODE.
\item[(b)]
Every continuum in $\mathbb{R}^{m}$ whose complement is diffeomorphic to $\mathbb{R}^{m} \setminus \{0\}$ is a funnel section of an $m$-dimensional ODE.
\item[(c)] All piecewise smooth, compact, connected polyhedra in $\mathbb{R}^{m}$ are funnel sections of $m$-dimensional ODEs.
\end{itemize}

(b) provides a great many pathological continua as funnel sections.  For example, all non-separating planar continua are funnel sections.  This includes the topologist's sine curve, the bucket handle, and the pseudo-arc (which  contains no ordinary arcs).  (c) implies that all compact smooth manifolds are funnel sections.

An obvious question remains open: Is the property of being a funnel section topological, or does it depend on how   the continuum is embedded in $\mathbb{R}^{m}$? The simplest case is the circle, where the question becomes: ``\emph{Is every Jordan curve a funnel section of a two dimensional ODE?}''     In this paper we answer the question affirmatively under some extra hypotheses, and point out the difficulties in general.  We also expand on some remarks in \cite{Pugh}.

\section{Smooth Pierceability}
\label{s.buttons}
An arc \textbf{pierces} a separating plane continuum $K$, such as a Jordan curve, if it   meets $K$ at a single point  and passes from one complementary component of $K$ to another. If the arc is smooth it \textbf{smoothly pierces} $K$.  Planar Jordan curves are everywhere pierceable and  smooth Jordan curves are everywhere smoothly pierceable.

\begin{Thm}
\label{t.pfunnel}
If  a planar Jordan curve is smoothly pierceable at some point then it is a funnel section of a two dimensional ODE.
\end{Thm}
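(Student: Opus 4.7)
The plan is to build an explicit continuous vector field $f \colon [0,1] \times \R^{2} \to \R^{2}$ and an initial point $(0, y_{0})$ whose funnel cross-section at time $1$ is $J$. The smooth pierceability is used both to provide a clean feed-in to the Jordan domain $\Omega$ bounded by $J$ and to control a conformal foliation of $\Omega$ near the piercing point.

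Take $y_{0} = p$, the piercing point. Constant solutions starting at an interior point of $\Omega$ would contaminate $K_{1}$, but the constant solution $y(t) \equiv p$ is harmless since $p \in J$. The remaining solutions come from a Riemann--Carath\'eodory homeomorphism $h \colon \overline{\mathbb{H}} \to \overline{\Omega}$ of the closed upper half-plane onto $\overline{\Omega}$ sending $\infty$ to $p$, so that $h|_{\R}$ parametrizes $J \setminus \{p\}$ by $c(x) = h(x)$. For each pair $(\tau, x) \in [0, 1) \times \R$ define a trajectory
\[
y_{\tau}^{x}(t) \, = \, h\bigl(x + i\, \eta(t; \tau)\bigr), \qquad t \in [\tau, 1],
\]
where $\eta(\,\cdot\,; \tau)$ is a decreasing profile with $\eta(\tau; \tau) = +\infty$ and $\eta(1; \tau) = 0$, and extend by $y_\tau^x(t) = p$ on $[0,\tau]$. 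A direct calculation shows that these trajectories foliate $(0, 1] \times \Omega$ and are integral curves of a common time-dependent vector field on $\Omega$, obtained by pulling back $\dot z = i \dot \eta$ from $\mathbb{H}$ through $h$. The funnel through $(0, p)$ then contains $y \equiv p$ together with the family $y_{\tau}^{x}$ for all $(\tau, x) \in [0,1)\times\R$; every member of the family ends on $J$ at time $1$, and the family exhausts $J \setminus \{p\}$ there.

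The main obstacle is continuity of the resulting vector field on $[0,1] \times \R^{2}$, both at points of $J$ (where $h$ may misbehave) and as $t \to 1$ off the trajectories (where the formula for $\dot z$ becomes singular). The profile $\eta$ must be chosen so that trajectories approach $J$ with vanishing speed: for example $\eta(t; \tau) = ((1-t)/(t-\tau))^{\alpha}$ with $\alpha > 1$ forces the vector field to vanish at $J$ fast enough to defeat any polynomial blow-up of $h'$ at the boundary. The smooth pierceability at $p$ is then invoked to control $h$ near $\infty \in \overline{\mathbb{H}}$, where a wild Jordan curve could otherwise spoil the Riemann map at the most delicate point of the construction; it also supplies the geometry for a preliminary Lipschitz feed-in phase if one prefers to start at an exterior point $y_{0}$, driving the unique solution from $y_{0}$ along the smooth piercing arc to $p$ at an intermediate time. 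Extending $f$ continuously outside a neighborhood of $\overline{\Omega}$ by a cutoff then completes the construction.
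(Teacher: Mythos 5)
There is a genuine gap, and it is not a matter of estimating $h'$ more carefully: the funnel geometry you propose is impossible for any continuous $f$. Your trajectories $y_\tau^x(t)=h(x+i\eta(t;\tau))$ are designed so that, at time $t<1$, the ones already in flight occupy the set $\{h(x+i\eta):\eta\geq\eta_{\min}(t)\}$ with $\eta_{\min}(t)\to 0$ as $t\to 1^-$; that is, the funnel section $K_t$ sweeps out Jordan subdomains that exhaust $\Omega$, and then the construction asks $K_1$ to equal $J$. But the funnel of a continuous compactly supported $f$ is a compact subset of $[0,1]\times\R^m$, so its sections are upper semicontinuous: $\limsup_{t\to 1^-}K_t\subset K_1$. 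Here $\limsup_{t\to 1^-}K_t=\overline{\Omega}\not\subset J$, a contradiction. Equivalently, fix an interior point $y_0=h(z_0)$, $\operatorname{Im}z_0=\eta_0>0$; with your profile $\eta(t;\tau)=((1-t)/(t-\tau))^\alpha$ one computes $V(t,\eta_0)=-\alpha\eta_0(1+\eta_0^{1/\alpha})/(1-t)\to-\infty$ as $t\to 1^-$, and since $|h'(z_0)|$ is bounded away from $0$, the pulled-back field $f(t,y_0)=h'(z_0)\,iV(t,\eta_0)$ blows up at every interior point as $t\to 1$. This is not ``off the trajectories'': the late-starting trajectory passes through height $\eta_0$ at a time close to $1$ with this unbounded speed. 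The exponent $\alpha>1$ does tame the behavior near $p$ (where $|h'(z)|\sim c/|z|^2$) and near $J$ (where $|h'(z)|\operatorname{Im}z\to 0$), but the blow-up at interior points is unavoidable for any profile satisfying $\eta(\tau;\tau)=\infty$, $\eta(1;\tau)=0$ for all $\tau\in[0,1)$, because a trajectory launched at $\tau$ near $1$ must cross a fixed interior height in time at most $1-\tau$.

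The paper sidesteps exactly this obstruction by never trying to fill $\Omega$ and then collapse. It uses the smooth piercing to ``open'' $J$ at $p$ into a compact planar arc $J^*$ (via the eye-shaped region and the map $\Psi^{-1}$), invokes the known fact that arcs are funnel sections since their complements are diffeomorphic to $\R^2\setminus 0$, and then ``closes the eye'' over a second time interval with a continuous vertical flow whose time-one map is $\Psi$. The collapse is from a compact $1$-dimensional set $J^*$ onto the $1$-dimensional $J$ along bounded vertical segments, so the flow has bounded speed and the sections vary upper semicontinuously all the way to $K_2=J$. If you want to keep your Riemann-map viewpoint, the lesson is that the funnel must track a one-parameter family of arcs limiting onto $J$, not a family of subdomains; the conformal parametrization might still be useful to produce such an arc family, but the present construction cannot be repaired by a better choice of $\eta$.
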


\begin{Patch}
\label{l.patch}
If $K$ is a funnel section of an $m$-dimensional ODE and $p \in \mathbb{R}^{m}$ is given then there exists a continuous  $g : \mathbb{R}^{m+1} \rightarrow  \mathbb{R}^{m}$ with compact support contained in $[0,1] \times \mathbb{R}^{m}$ such that $K = K_{1}(0,p,g)$.
\end{Patch}

\begin{proof}[\bf Proof]
This is   Proposition~2.4 of \cite{Pugh}.  It lets us   patch funnels together, one to the next.    
\end{proof}

\begin{proof}[\bf Proof of Theorem~\ref{t.pfunnel}.]  
Let $J \subset  \mathbb{R}^2$ be a Jordan curve   pierced at $p$ by a smooth  arc $A$.    We may assume $p$ is the origin and $A$ contains the horizontal segment $[-1,1] \times \{0\}$.  Let $\beta  : \mathbb{R} \rightarrow  [0,1)$ be a smooth bump function such that the eye-shaped region $S$ between the graphs of $-\beta $ and $\beta $ is as in Figure~\ref{f.eye-shaped region}.  
\begin{figure}[htbp]
\centering
\includegraphics[scale=.55]{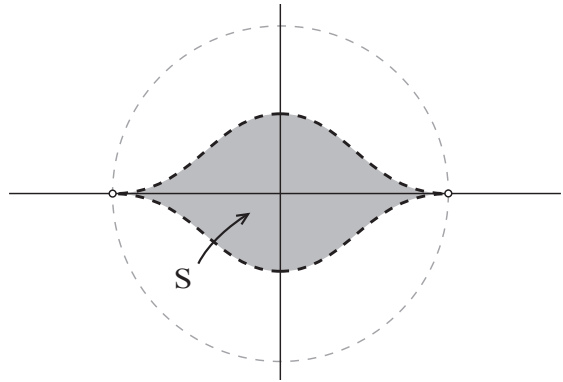}
\caption{The eye-shaped region $S$.}
\label{f.eye-shaped region}
\end{figure}
There is a smooth map $\Psi  : \mathbb{R}^{2} \setminus  S \rightarrow  \mathbb{R}^{2}$ closing the eye.  It sends the vertical segment $x \times [\beta (x),1]$ to $x \times  [0,1]$, is symmetric with respect to $y \mapsto  -y$,  and is the identity outside the unit disc $\mathbb{D} $.  Except for the fact that $\Psi (x,\beta (x)) = (x,0) = \Psi (x,-\beta (x))$, $\Psi $ is a diffeomorphism.  

The map $\Psi ^{-1} : \mathbb{R}^{2} \setminus (-1,1) \times  0$ opens the eye and sends $J \setminus 0$ to a an open arc.  Let $J^{\displaystyle *} = \Psi^{-1}(J\setminus 0) \cup (0, \beta (0)) \cup (0, -\beta (0))$.   It is a compact planar arc, so it is a funnel section: $J^{\displaystyle *} = K_{1}(0,p,f)$ for   some $p \in \mathbb{R}^{2}$ and some $f = f(t,x,y)$ with compact support in $[0,1] \times  \mathbb{R}^{2}$. (Here we used the Patching Lemma and the fact from \cite{Pugh} that every planar continuum whose complement is diffeomorphic to $\mathbb{R}^{2} \setminus  0$ is a funnel section.)

Next, we gradually close the eye as $t$ varies from $t=1$ to $t=2$.  There is a continuous  vector field $g= g(t,x,y)$ tangent to the vertical lines $x \times  \mathbb{R}$ whose forward trajectories on $[1,2] \times x\times [-1,1]$ are shown  in Figure~\ref{f.Psiflow}.    
\begin{figure}[htbp]
\centering
\includegraphics[scale=.55]{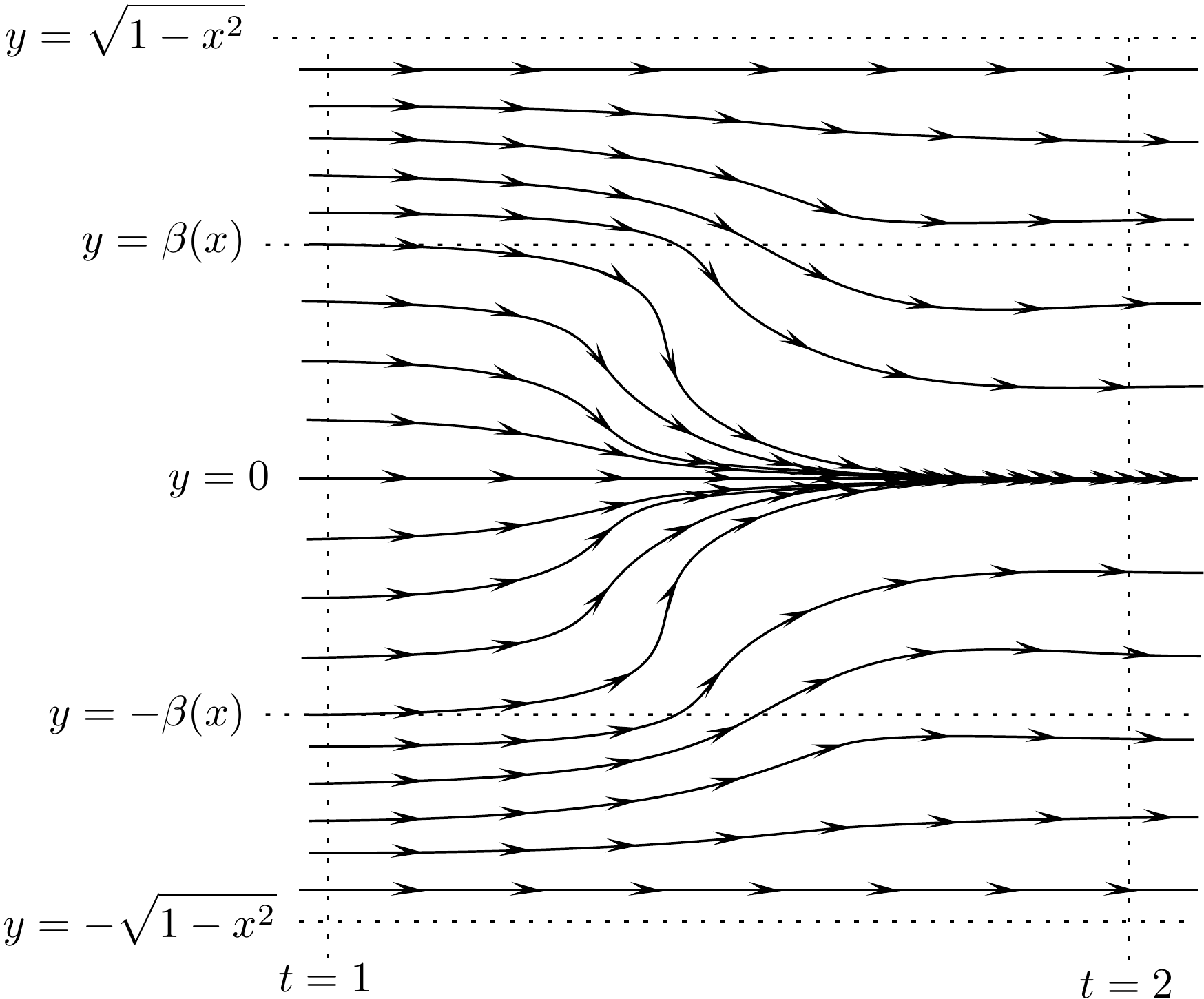}
\caption{A $g$-trajectory starting at $(1,x,y)$ with $\abs{y} \geq  \beta (x)$ ends at $(2,\Psi (x,y))$.}
\label{f.Psiflow}
\end{figure}
The time-one map of the forward $g$-flow is $\Psi $.  See Figure~\ref{f.Psi}.  
  \begin{figure}[htbp]
\centering
\includegraphics[scale=.7]{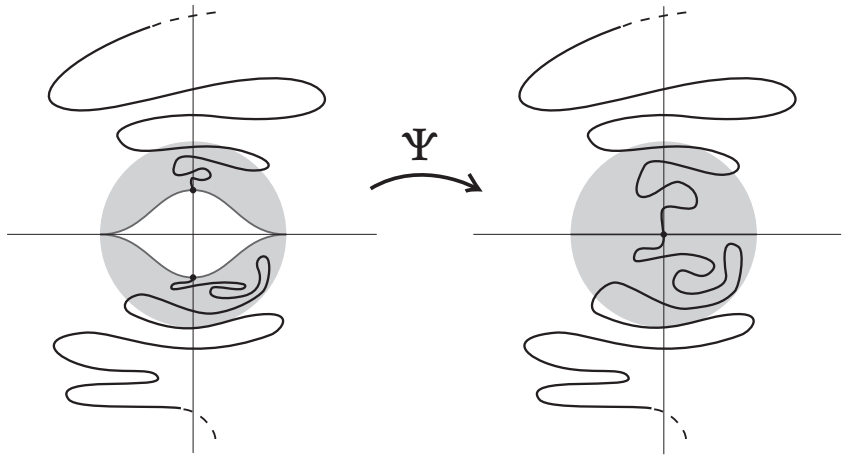}
\caption{$\Psi$ closes the eye.}
\label{f.Psi}
\end{figure}
Thus  $J$ is a funnel section, $J = K_{2}(0,p,f+g)$. 
\end{proof}

\begin{Thm}
\label{t.pJordan}
There exist planar Jordan curves, smoothly pierceable at no points. Some  of them are funnel sections.
\end{Thm}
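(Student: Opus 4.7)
The plan is to prove the theorem in two stages: first construct a specific nowhere smoothly pierceable Jordan curve $J$ as a Hausdorff limit of piecewise smooth Jordan curves $J_n$, then use the Patching Lemma together with Theorem~\ref{t.pfunnel} applied to the $J_n$ to splice a sequence of ODEs whose time-$1$ funnel section is exactly $J$.

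For the geometric construction, I would take $J_0$ to be the unit circle and obtain $J_{n+1}$ from $J_n$ by grafting onto it a finite collection of smooth \emph{double whiskers}---pairs of thin smooth arcs, one protruding into the interior complementary component and one into the exterior---at a $2^{-n}$-dense set of base points of $J_n$, with whisker directions chosen from a set that is $2^{-n}$-dense in $S^{1}$, and with whisker diameters $\ell_{n}$ decaying rapidly enough (say $\ell_{n}\le 2^{-n^{2}}$) to keep each $J_{n+1}$ a Jordan curve and the Hausdorff limit $J=\lim J_{n}$ well defined and itself Jordan. By the direction density of the whiskers, at every $p\in J$ and every $v\in S^{1}$ the curve $J$ carries whiskers at arbitrarily small scales emanating from points arbitrarily close to $p$ in directions arbitrarily close to $v$. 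Any smooth arc $A$ through $p$ is then forced by an elementary transversality/separation argument to meet $J\setminus\{p\}$ at every scale, so $A$ cannot pierce $J$ at $p$, and $J$ is smoothly pierceable at no point.

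For the funnel-section assertion, each $J_n$ is piecewise smooth and therefore smoothly pierceable, so by Theorem~\ref{t.pfunnel} it is a funnel section. I apply the Patching Lemma iteratively on the intervals $[1-2^{-n},\,1-2^{-n-1}]$: on $[0,1/2]$ realize $J_{0}$ as the time-$1/2$ funnel section of some compactly supported $f_{0}$ starting at a base point $p$; on each subsequent interval insert an ODE that holds $J_{n}$ fixed outside small tubes around the new whiskers of $J_{n+1}\setminus J_{n}$ and inside each such tube grows one trajectory from the whisker's base point out to its tip. Iterating this and extending the total field by $f\equiv 0$ at $t=1$---permissible because $\ell_{n}$ and the interval lengths both decay geometrically---produces a continuous $f$ with compact support in $[0,1]\times\mathbb{R}^{2}$. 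Then $K_{1}$ contains every whisker tip of every $J_n$ and hence their closure $J=\overline{\bigcup_{n}J_{n}}$, while every solution either rests at a whisker tip in some $J_n$ or migrates through infinitely many stages and ends at a limit of whisker tips, again a point of $J$; so $K_{1}=J$.

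The main obstacle is engineering the inductive ODEs carefully enough that the total field is genuinely continuous at each joining time $1-2^{-n}$ and, crucially, so that no unintended Kneser branching introduces spurious solutions landing outside $J$. This requires the tube-supported whisker-growing fields to be uniqueness-preserving in the direction transverse to each tube, so that solutions of the global ODE cannot diffuse off the prescribed trajectories, while allowing the permitted branching along each whisker between ``remain at the base'' and ``migrate to the tip''. A secondary but real subtlety is arranging the geometric attachments so that no two whiskers collide and no nested sequence of whiskers pinches $J$ at a point in the limit; this is controlled by keeping the base-point spacing at scale $n$ much larger than $\ell_{n}$, and verifying the Jordan property of $J$ via a straightforward but bookkeeping-heavy induction on the parameterization.
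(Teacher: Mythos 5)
Your high-level strategy---construct the curve as a limit of nicer curves and build the funnel inductively on time intervals shrinking to $t=1$---is the same shape as the paper's, but both halves of your argument have genuine gaps, and the paper's proof (Section~\ref{s.diffeotopies}) resolves each of them with a specific tool you are not using.

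For the geometric half, the claim that ``any smooth arc $A$ through $p$ is forced by an elementary transversality/separation argument to meet $J\setminus\{p\}$ at every scale'' is not proved and is implausible with the decay you specify. At stage $n$ the whisker bases are $2^{-n}$-dense while the whiskers themselves have diameter $\le 2^{-n^{2}}$; so the gap-to-whisker ratio at scale $n$ blows up, and the total ``shadow'' cast by all whiskers on a small transversal near $p$ is negligible. A smooth arc that crosses transversally at $p$ can be perturbed to thread these gaps at every scale; the whiskers are spikes, not separators, so hitting nothing near $p$ is entirely possible. The paper avoids this problem by replacing ``blocking'' with a quantitative invariant: \emph{resistance}, the infimal length of a polar path. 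The zipper construction (Lemmas~\ref{l.strip}--\ref{l.zippers}) increases resistance by controlled amounts, and Lemma~\ref{l.lsc} (lower semicontinuity of resistance at smooth curves) makes this increase stable under Hausdorff perturbation, so that the limit curve provably has infinite resistance and hence no finite-length piercing.

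For the funnel half, the Patching Lemma is the wrong tool for the iterative step. It produces a field whose time-$1$ funnel section from a single \emph{point} $p$ is a prescribed continuum; it does not let you prescribe the funnel section at $t_{n+1}$ given a \emph{set} $J_n$ of initial conditions at $t_n$. To transport $J_{n}$ to $J_{n+1}$ without unwanted Kneser branching, you need the stage-$n$ field to generate a genuine flow --- that is, a diffeotopy whose transfer map sends $J_n$ to $J_{n+1}$; then trajectories are unique and the funnel section at $t_{n+1}$ is exactly the image of $J_n$. This is precisely Theorem~\ref{t.diffeotopy} in the paper: a bounded-speed diffeotopy from a funnel section $A$ to a set $B$ makes $B$ a funnel section. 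The paper's conditions (a$_n$), (b$_n$), (c$_n$) ensure, respectively, bounded speed of the total diffeotopy, $C^{0}$-Cauchyness of the composed transfer maps and their inverses (so the limit is a homeomorphism and $J$ is Jordan), and Hausdorff proximity $d_{H}(J_n,J)<\delta(J_n)$ so that lower semicontinuity forces $r(J)>n$ for all $n$. Your sketch acknowledges the branching difficulty but leaves it unresolved; it is the central point, and it is solved by switching from arbitrary compactly supported ODEs to diffeotopy-generated ones.
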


See Section~\ref{s.diffeotopies} for the proof of the second assertion.

\begin{Rmk}
It is natural to expect that   unions and intersections of funnel sections of $m$-dimensional ODEs are funnel sections of $m$-dimensional ODEs. The union case is an open question, while the intersection assertion is false.  In fact, if it were known that the union of two funnel sections of $2$-dimensional ODEs   is a funnel section of a $2$-dimensional ODE then it would follow at once that every planar Jordan curve $J$ is a funnel section of a $2$-dimensional ODE.  For $J$ is the union of two arcs, each being a funnel section of a $2$-dimensional ODE by (b) in Section~\ref{s.intro}.  See Section~\ref{s.oneup} for the union question when it is permitted to raise the dimension.

To understand the funnel intersection question, consider the   outward spiral together with its limit circle, $\overline{S}$.  It is not a funnel section, but its one-point suspension $T$ is one.  For the complement of $T$ in $\mathbb{R}^{3}$ is diffeomorphic to the complement of a point.  The closed unit disc $\overline{\mathbb{D} } \subset \mathbb{R}^{2} \subset \mathbb{R}^{3}$ is a funnel section, but $\overline{S} = T \cap \overline{\mathbb{D} }$ is not.
\end{Rmk}

\section{Nowhere smoothly Pierceable Jordan curves}
\label{s.nowhere}
It is not surprising that there exist planar Jordan curves which are nowhere smoothly pierceable.  We prove slightly more.

\begin{Thm}
\label{t.pJordanL}
There are planar Jordan curves that are nowhere pierceable by paths of finite length.  Some of them are funnel sections.
\end{Thm}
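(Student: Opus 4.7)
The plan for the first assertion is an explicit inductive construction. I would start from a smooth Jordan curve $J_0$ and define $J_n$ by modifying $J_{n-1}$ in small neighborhoods of a finite $\epsilon_n$-dense subset $\{p_{n,i}\}$ of $J_{n-1}$. Each modification replaces a short arc of $J_{n-1}$ near $p_{n,i}$ by a finely oscillating \emph{zigzag} between two parallel translates of the deleted arc, chosen so that any rectifiable arc that crosses the zigzag transversally and meets $J_n$ in only one point must have length at least $L_n$, where $L_n\to\infty$. Taking $\sum\epsilon_n<\infty$ and arranging that each modification stays inside a shrinking $\epsilon_n$-collar of $J_{n-1}$ yields Hausdorff convergence of $J_n$ to a compact set $J$, and natural reparametrizations converge uniformly to a parametrization of $J$. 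Verifying that $J$ is a Jordan curve (injectivity of the limiting parametrization) is delicate but standard provided the shrinking rates and the widths of the zigzag tubes are tuned so that the diameters of preimages go to zero uniformly.

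The core step is to propagate the nowhere-pierceable property from the explicit base points $\bigcup_n\{p_{n,i}\}$ to every point of $J$. Suppose $\alpha$ is rectifiable, $\alpha\cap J=\{p\}$, and $\alpha$ crosses between the two complementary components of $J$. Pick $n$ with $L_n>\mathrm{length}(\alpha)$ and use density to choose a stage-$n$ base point $p_{n,i}$ within a distance of $p$ much smaller than the ``transverse gap'' of $\alpha$ at $p$. The stage-$n$ zigzag at $p_{n,i}$ then sits in a thin neighborhood that $\alpha$ must thread in order to pass from one side of $J$ to the other at $p$; this forces $\alpha$ either to hit $J$ at a second point on the zigzag, contradicting $\alpha\cap J=\{p\}$, or to accrue length exceeding $L_n$, contradicting the choice of $n$. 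Making this quantitative propagation precise, and in particular controlling how the stage-$n$ zigzag structure at $p_{n,i}$ constrains arcs through nearby points of $J$, is the main obstacle.

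For the second assertion I would defer to Section~\ref{s.diffeotopies}, as the author indicates, and follow this scheme. Each inductive modification $J_{n-1}\to J_n$ can be realized as the time-one map of a continuous planar vector field supported in a small region, in the spirit of the field $g$ that closed the eye in the proof of Theorem~\ref{t.pfunnel}. One reparametrizes these fields to act on disjoint time intervals accumulating at $t=1$, splices them onto a funnel whose initial section is a smooth, smoothly pierceable Jordan curve (a funnel section by Theorem~\ref{t.pfunnel}) via the Patching Lemma, and checks that the resulting time-dependent vector field extends continuously across the accumulation time with limiting section $J$. The difficulty here is precisely continuity at the accumulation time, since the zigzag modifications have cusps at the base points and the vector field becomes only continuous, not smooth, in the limit.
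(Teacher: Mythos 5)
Your propagation step is the genuine gap, and you flag it correctly but do not close it.  An arc $\alpha$ piercing the limit curve $J$ at $p$ is not forced to thread the stage-$n$ zigzag at $p_{n,i}$ merely because $p_{n,i}$ lies within $\epsilon_n$ of $p$: after crossing $J$ at $p$, $\alpha$ can leave the thin collar of $J_{n-1}$ immediately and never approach the zigzag, and nothing in a local zigzag traps polar paths the way the paper's button geometry does.  Worse, your modifications at stages $m>n$ are carried out inside a shrinking collar of $J_{m-1}$, which is precisely where the stage-$n$ zigzag sits, and you give no argument that the later modifications cannot \emph{undo} the length-enforcing structure installed earlier.  Without a uniform lower bound that survives the whole inductive tail, the claim $r(J)=\infty$ for the limit curve does not follow from $r(J_n)\to\infty$ for the approximants.

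The paper avoids all of this by introducing a global scalar invariant, the \emph{resistance} $r(J)$ --- the infimal length of a polar path piercing $J$ --- and proving (Lemma~\ref{l.lsc}) that $r$ is lower semicontinuous at \emph{smooth} curves with respect to the Hausdorff metric.  This collapses your pointwise propagation problem into one quantitative statement: if $J$ is smooth with $r(J)>\alpha$, there is a $\delta(J)>0$ so that $d_H(J,J')<\delta(J)$ forces $r(J')>\alpha$.  Existence then follows from a Baire-category argument in the complete metric space of sphere homeomorphisms (Lemma~\ref{l.generic} and the Resistance Theorem) rather than from a hands-on limit.  For the funnel-section assertion the paper does proceed via an inductive diffeotopy and Theorem~\ref{t.diffeotopy}, much in the spirit you describe, but the construction keeps each smooth $J_n$ with $r(J_n)>n$ and confines all later stages (hence the limit $J$) within Hausdorff distance $\delta(J_n)$ of $J_n$, so $r(J)>n$ for every $n$; your splicing scheme lacks exactly this control and therefore cannot conclude that the section obtained at the accumulation time is nowhere pierceable.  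The continuity worry you raise at $t=1$ is handled cleanly by enforcing $\abs{\varphi_n'}<1/n$, so the generating vector field tends to zero at the accumulation time; the loss of smoothness in the limit homeomorphism is harmless.
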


See Section~\ref{s.diffeotopies} for the proof of the second assertion.

If $J \subset S^{2}$ is a Jordan curve that separates the north and south poles and $\gamma $ is a path from one pole to the other that pierces   $J$   then we call $\gamma $ a \textbf{polar path}  for $J$.  Every polar path has length $\geq  \pi $.  The \textbf{resistance} of $J$ is
$$
r(J) = \inf \{ \ell(\gamma ) : \gamma  \textrm{ is a polar path for } J \}
$$
where $\ell (\gamma )$ is the length of $\gamma $.   

The greater the resistance, the longer it takes a     point to travel at unit speed from pole to pole crossing $J$ just once.  The equator has resistance $\pi $.  Approximating it by a  Jordan curve $B$ made of many small consecutive buttons as in Figure~\ref{f.button} does not increase the resistance, but subsequently adding zippers across the buttons as in Figure~\ref{f.zipperedbutton} increases the resistance as much as we want.
\begin{figure}[htbp]
\centering
\includegraphics[scale=.55]{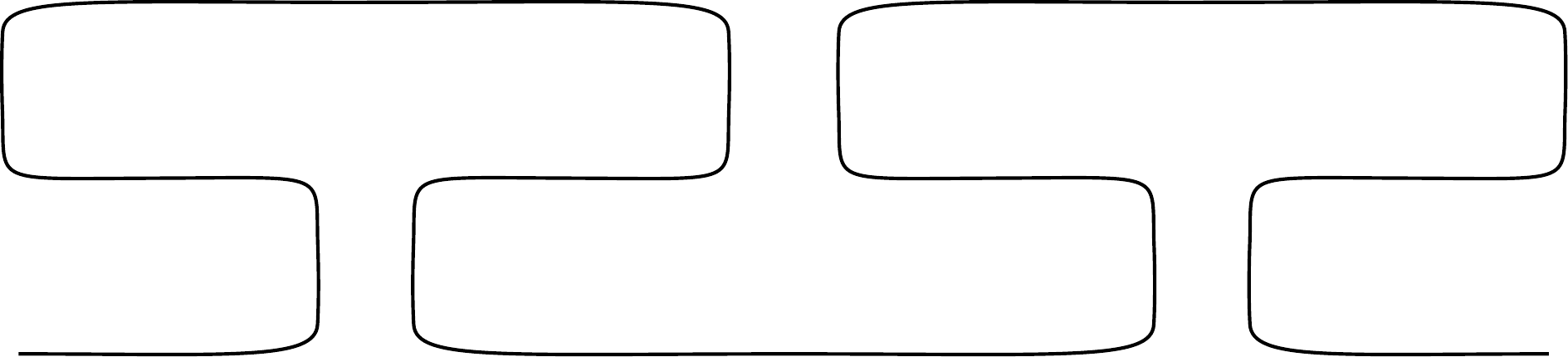}
\caption{Part of a button curve $B$.}
\label{f.button}
\end{figure}

\begin{figure}[htbp]
\centering
\includegraphics[scale=.55]{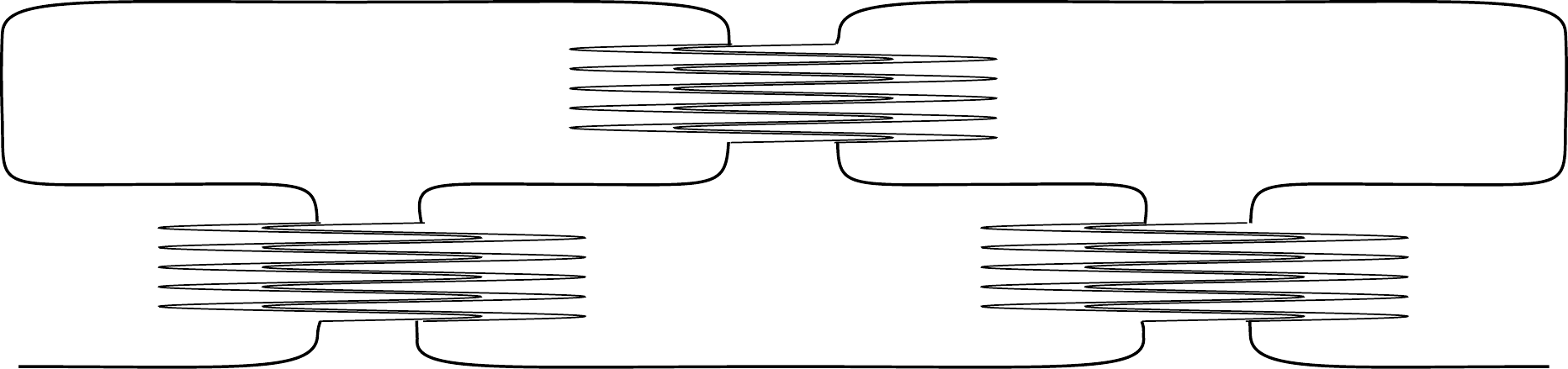}
\caption{Part of a zippered button curve  --  a \textbf{resistor curve}.}
\label{f.zipperedbutton}
\end{figure}

\begin{Res}
\label{t.resistance}
There exist Jordan curves with infinite resistance.
\end{Res}

\begin{proof}[\bf Proof.]
Modify the equator by approximating it with a smooth resistor curve  $R_{1}$ having resistance $> \pi $.  Then   approximate $R_{1}$ with a resistor curve $R_{2}$ having resistance $> 2\pi $, etc., and take a limit.  The details appear below.
\end{proof}

\begin{Lemm}
\label{l.strip}
There is a diffeomorphism $\sigma: [0,1]^{2} \rightarrow [0,1]^{2}$ such that $\sigma   $ is the identity on a neighborhood of the boundary and $\sigma   $ carries the rectangle $[2/5, 3/5] \times [0,1]$ to an S-shaped strip  $S$  such that for every path $\gamma (t) = (x(t), y(t))$ in $S$   connecting the top and bottom of $S$, $0 \leq  t \leq  1$, we have
$$
\int_{0}^{1} \abs{x^{\prime}(t)} \, dt  \geq  1 \ .
$$
\end{Lemm}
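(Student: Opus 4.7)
The strategy is to bend the vertical strip $[2/5,3/5]\times[0,1]$ into an S whose ``centerline'' swings near the right edge of the square at some intermediate height $y_1$ and near the left edge at some higher intermediate height $y_2$. Any path from the bottom to the top of $S$ must then drag its $x$-coordinate from roughly $1/2$ out toward $1$, back across to near $0$, and finally back toward $1/2$, accumulating horizontal total variation exceeding $1$.

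\emph{Construction.} Fix $\delta=1/10$, pick $0<y_1<y_2<1$, and choose a smooth function $U\colon[0,1]\to[\delta,1-\delta]$ that equals $1/2$ on neighborhoods of $y=0$ and $y=1$, with $U(y_1)=1-\delta$ and $U(y_2)=\delta$. Choose also a smooth half-width function $w\colon[0,1]\to(0,1/10]$ with $w\equiv 1/10$ near $y=0,1$ and $w(y_1)=w(y_2)=1/20$. Define $\sigma(x,y)=(\phi_y(x),y)$, where $\phi_y$ is a $C^{\infty}$ orientation-preserving diffeomorphism of $[0,1]$ sending $[0,2/5]$, $[2/5,3/5]$, $[3/5,1]$ (piecewise-affinely, then smoothed at the two corners) to $[0,U(y)-w(y)]$, $[U(y)-w(y),U(y)+w(y)]$, $[U(y)+w(y),1]$ respectively, and equal to the identity near $x=0$ and $x=1$. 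Because $U\equiv 1/2$ and $w\equiv 1/10$ near $y=0$ and $y=1$, $\phi_y$ is the identity there, so $\sigma=\textnormal{id}$ on a neighborhood of $\partial[0,1]^2$. The image $S=\sigma([2/5,3/5]\times[0,1])=\{(x,y):|x-U(y)|\le w(y)\}$ is an S-shaped strip.

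\emph{Verification.} Let $\gamma(t)=(x(t),y(t))\in S$ with $y(0)=0$, $y(1)=1$ (the reverse case is symmetric). By the intermediate value theorem applied to $y$, there exist $t_a,t_b\in[0,1]$ with $y(t_a)=y_1$ and $y(t_b)=y_2$; containment of $\gamma$ in $S$ forces $x(t_a)\ge 1-\delta-w(y_1)=17/20$ and $x(t_b)\le \delta+w(y_2)=3/20$, while $x(0),x(1)\in[2/5,3/5]$. Ordering the times $0<\min(t_a,t_b)<\max(t_a,t_b)<1$ and using the standard lower bound for total variation in terms of three intermediate values yields, regardless of the ordering of $t_a,t_b$,
\[
\int_0^1 |x'(t)|\,dt \;\ge\; \tfrac{5}{20}+\tfrac{14}{20}+\tfrac{5}{20} \;=\; \tfrac{6}{5} \;>\; 1.
\]

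\emph{Main obstacle.} The only subtle point is realizing $\sigma$ as a genuine smooth diffeomorphism after smoothing the two corner creases of the piecewise-affine $\phi_y$ in the $x$ variable; this is routine because the three slopes of $\phi_y$ are uniformly positive and bounded as $y$ varies in $[0,1]$, so a standard convolution with a narrow $x$-bump preserves monotonicity and depends smoothly on $y$.
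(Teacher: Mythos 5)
Your proof is correct and follows exactly the approach the paper intends: the paper's proof of this lemma consists solely of a reference to Figure~\ref{f.newS} depicting the S-shaped strip, and your explicit construction of $\sigma$ together with the three-interval total-variation estimate is a rigorous implementation of precisely what that figure suggests. Your bound of $6/5$ is even slightly stronger than the stated $1$.
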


\begin{proof}[\bf Proof.]
See Figure~\ref{f.newS}.
\begin{figure}[htbp]
\centering
\includegraphics[scale=.55]{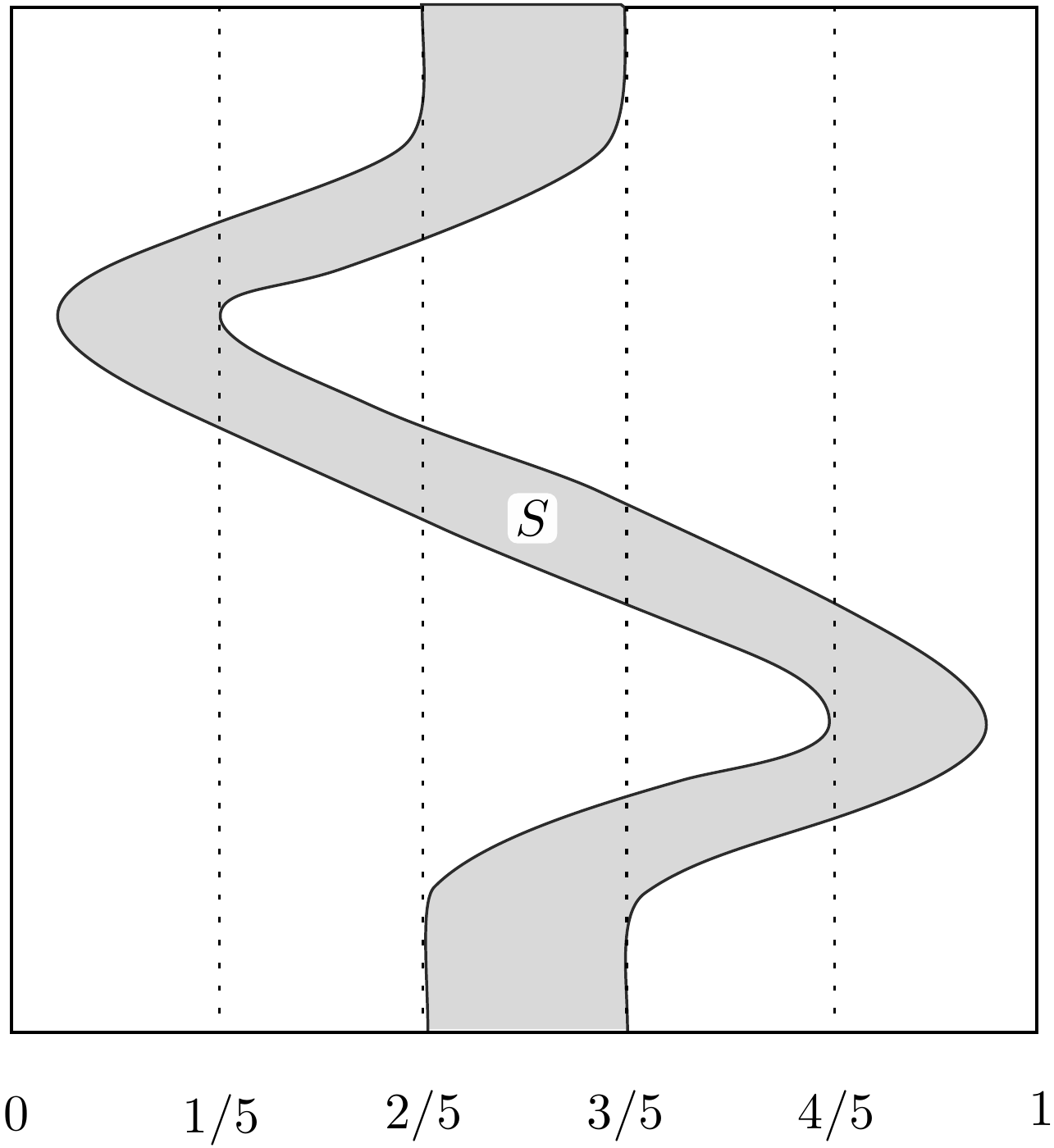}
\caption{The S-shaped strip $S$.}
\label{f.newS}
\end{figure}
\end{proof}

\begin{Lemm}
\label{l.zipper} 
Given $L > 0$, there  is a diffeomorphism $\zeta: [0,a] \times  [0,b] \rightarrow [0,a] \times [0,b]$ such that $\zeta  $ is the identity on a neighborhood of the boundary and $\zeta  $ carries the rectangle $[2a/5, 3a/5] \times [0,b]$ to a zipper  strip  $Z$  such that every path $\gamma  $ in $Z$   connecting the top and bottom of $Z$  has
length $\geq L$.
\end{Lemm}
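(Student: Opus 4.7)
The plan is to stack $N$ scaled copies of the S-shaped strip from Lemma~\ref{l.strip} vertically inside $[0,a]\times[0,b]$, choosing $N$ large enough that $Na\geq L$. Each stacked S-shape will force any path across that slab to have horizontal variation (hence length) at least $a$, and summing over the $N$ slabs produces total length at least $L$.

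For the construction, I would partition $[0,a]\times[0,b]$ into $N$ horizontal slabs $R_k=[0,a]\times[(k-1)b/N,\,kb/N]$ and, for each $k$, use the affine isomorphism $[0,1]^2\to R_k$ to conjugate $\sigma$ from Lemma~\ref{l.strip} into a diffeomorphism $\sigma_k\colon R_k\to R_k$ that is the identity near $\partial R_k$ and sends $[2a/5,\,3a/5]\times[(k-1)b/N,\,kb/N]$ onto an S-shape $Z_k\subset R_k$. Because every $\sigma_k$ is the identity near its slab boundary, the maps glue into a single smooth diffeomorphism $\zeta\colon[0,a]\times[0,b]\to[0,a]\times[0,b]$, identity near the outer boundary, carrying $[2a/5,\,3a/5]\times[0,b]$ onto the zipper $Z=Z_1\cup Z_2\cup\cdots\cup Z_N$. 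A direct rescaling of the horizontal-variation estimate in Lemma~\ref{l.strip} yields that every path in $Z_k$ connecting $y=(k-1)b/N$ to $y=kb/N$ has $\int|X'(t)|\,dt\geq a$, and so has length at least $a$.

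For the length bound itself, given any path $\gamma$ in $Z$ joining the top edge $y=b$ to the bottom edge $y=0$, the $y$-coordinate of $\gamma$ sweeps continuously from $b$ to $0$, so by the intermediate value theorem there exists, for every $k$, a sub-arc $\gamma_k$ of $\gamma$ lying in $R_k$ whose endpoints meet the edges $y=(k-1)b/N$ and $y=kb/N$. Since $\gamma_k\subset Z\cap R_k=Z_k$, the rescaled lemma gives $\ell(\gamma_k)\geq a$, and summing over $k$ yields $\ell(\gamma)\geq Na\geq L$.

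The main obstacle is precisely the careful extraction of such a sub-arc $\gamma_k$ in each slab: one has to isolate a connected piece of $\gamma$ inside $R_k$ whose endpoints genuinely lie on the two horizontal edges so that the Lemma~\ref{l.strip} bound applies, which is a short exercise in one-dimensional topology using the components of $\gamma^{-1}(R_k)$ once $\zeta$ is in place. The smooth gluing of the $\sigma_k$ and the affine rescaling of the horizontal-variation estimate are then routine.
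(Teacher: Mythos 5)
Your proof is correct and takes essentially the same approach as the paper: partition $[0,a]\times[0,b]$ into $n$ horizontal slabs of height $b/n$ with $n\geq L/a$, insert an affinely rescaled copy of the strip diffeomorphism $\sigma$ from Lemma~\ref{l.strip} in each, glue along the slab boundaries (where each copy is the identity), and observe that any path crossing the zipper must traverse all $n$ rescaled S-strips, each contributing horizontal variation (hence length) at least $a$, for a total of $na\geq L$. The sub-arc extraction you flag is the one point the paper leaves implicit; your treatment of it is a sound and welcome bit of extra care, but it does not alter the argument's structure.
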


\begin{proof}[\bf Proof.]
Reduce the unit square to a rectangle $[0,a] \times  [0,b/n]$ with $n \geq  L/a$.  Stack $n$ copies of the reduced strip diffeomorphism $\sigma $  from Lemma~\ref{l.strip} to form  $\zeta $ and $Z$.  The length of $\gamma $ is at least $na \geq  L$. See Figure~\ref{f.newZ} in which $a=1$, $b=1/2$, and $n=4$.
\end{proof}

\begin{figure}[htbp]
\centering
\includegraphics[scale=.55]{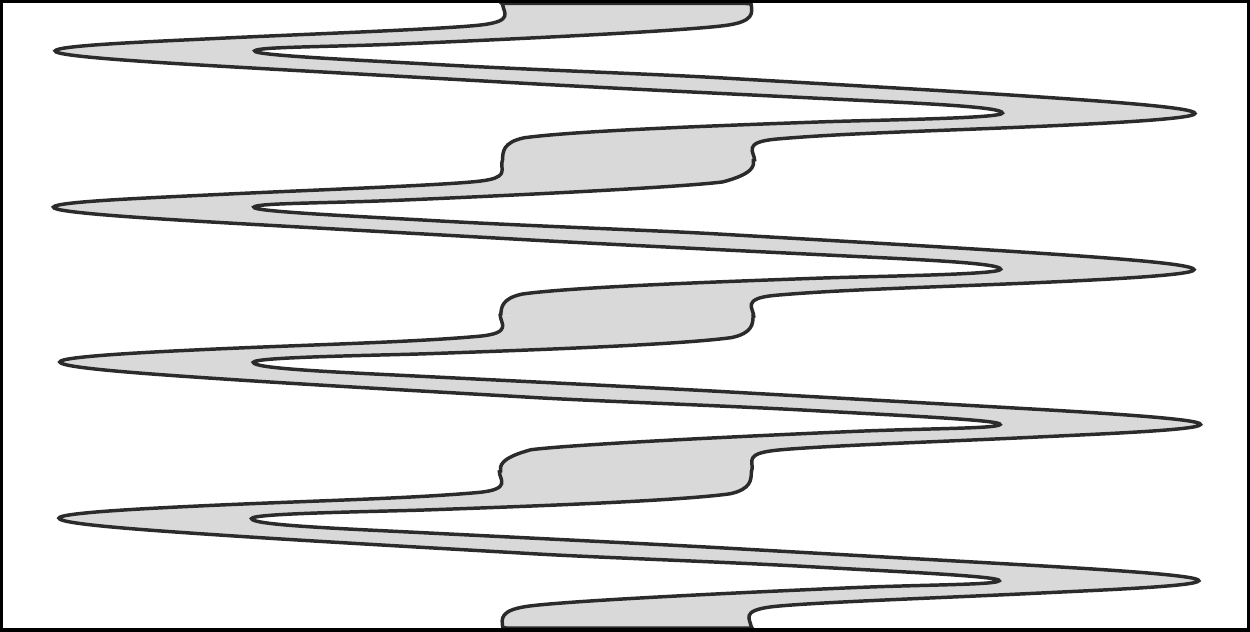}
\caption{A stack of four unit-length S-strips forms a zipper strip $Z$ with    $r(Z) \geq  4$.}
\label{f.newZ}
\end{figure}

 Let $C$ be the cylinder $S^{1} \times  [-1,1]$ and let $J$ be a Jordan curve that separates its top and bottom. As for paths on the sphere, a path on the cylinder  connecting the   top and bottom   is polar for   $J$ if  it meets $J$ exactly once, and the  resistance of $J$ is the infimum of the lengths of the polar paths.  Let $E$ be the equator of $C$.

\begin{Lemm}
\label{l.zippers}
Given $\epsilon  > 0$ and $L > 0$, there is a diffeomorphism $\varphi  : C \rightarrow  C$ in the $\epsilon $-neighborhood of the identity such that $\varphi   $ is the identity off the      $\epsilon $-neighborhood of $E$,  and   $J = \varphi  (E)$  has resistance $> L$.
\end{Lemm}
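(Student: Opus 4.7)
The plan is to build $\varphi$ out of many small local modifications of the equator, one per arc of a fine partition of $S^{1}$. Fix a small $\delta<\epsilon$ and partition $S^{1}$ into $N$ arcs $I_{1},\ldots,I_{N}$ of equal length $a=2\pi/N$, with $N$ to be chosen large. Each arc gives a box $B_{i}=I_{i}\times[-\delta,\delta]$ contained in the $\delta$-neighborhood of $E$. Inside each $B_{i}$ I would apply a rescaled copy of the zipper diffeomorphism from Lemma~\ref{l.zipper} (with width $a$, height $2\delta$, and target resistance $L+1$), producing a zipper strip $Z_{i}\subset B_{i}$. Since each local piece is the identity near $\partial B_{i}$, they patch into a global diffeomorphism $\varphi:C\to C$ supported in the $\delta$-neighborhood of $E$, and for $\delta$ sufficiently small $\varphi$ lies in the $\epsilon$-neighborhood of the identity.

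The resistance bound is the main content. Let $J=\varphi(E)$. Any polar path $\gamma$ from pole to pole crosses $J$ at exactly one point $p\in B_{i_0}$ for some $i_0$. The key claim is that some sub-arc of $\gamma$ lies entirely in the zipper strip $Z_{i_0}$ and joins its top edge to its bottom edge; Lemma~\ref{l.zipper} then gives $\ell(\gamma)\geq L+1>L$.

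The hard part is establishing this sub-arc claim. A priori nothing prevents $\gamma$ from crossing $J$ at one of the tame equator pieces near the left or right edges of $B_{i_0}$, where the local diffeomorphism is the identity, via a short nearly-vertical segment that never enters the zipper. To eliminate this I would precede the zippering step by a ``buttoning'' diffeomorphism that first deforms $E$ into a button curve as in Figure~\ref{f.button}, made of consecutive rectangular arches, and then apply the local zipper inside each arch. The result, as in Figure~\ref{f.zipperedbutton}, is a $J$ which inside each $B_{i}$ is an arch enclosing a zipper strip, so that any single crossing of $J$ by $\gamma$ is forced through the zipper. Verifying this via a Jordan-curve-style separation argument inside each $B_{i}$ is the main topological step, and checking that the composite diffeomorphism remains in the $\epsilon$-neighborhood of the identity is routine once $\delta$ is chosen small enough.
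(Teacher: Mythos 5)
Your corrected proposal matches the paper's proof in all essentials: the paper also builds $\varphi$ as a composite $\zeta \circ \phi$, where $\phi$ first deforms the equator into a button curve (one button per small square of a subdivision of $C_a = S^1 \times [-a,a]$ with $a \leq \epsilon/2$) and $\zeta$ then glues rescaled zipper diffeomorphisms from Lemma~\ref{l.zipper} into $2n$ small rectangles so that every polar path for $J = \varphi(E)$ is forced to traverse a full zipper strip, with the smallness of $a$ controlling the $C^{0}$-distance to the identity exactly as your $\delta$ does. Your self-diagnosis of why zippering alone would fail --- a polar path could cross $J$ on an undisturbed equatorial piece and slip around the zipper --- is precisely why the paper includes the buttoning step, so the proposal is correct and follows the paper's route.
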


\begin{proof}[\bf Proof.] Choose $a =1/n \leq \epsilon /2$ and divide    the cylinder $C_{a} = S^{1} \times [-a, a]$ into $n$ squares   of size $a \times  a$.  Draw a button curve  $B $ with $n$ buttons, one in each square.  There is a diffeomorphism $\phi    : C  \rightarrow  C $ sending each square to itself such that $\phi   (E) = B $ and $\phi $ is the identity off $C_{a}$.

Then draw  $2n$ rectangles $\rho, \rho ^{\prime} $ of length $a/2$ and height $b$ as shown in Figure~\ref{f.zippers}.
\begin{figure}[htbp]
\centering
\includegraphics[scale=.55]{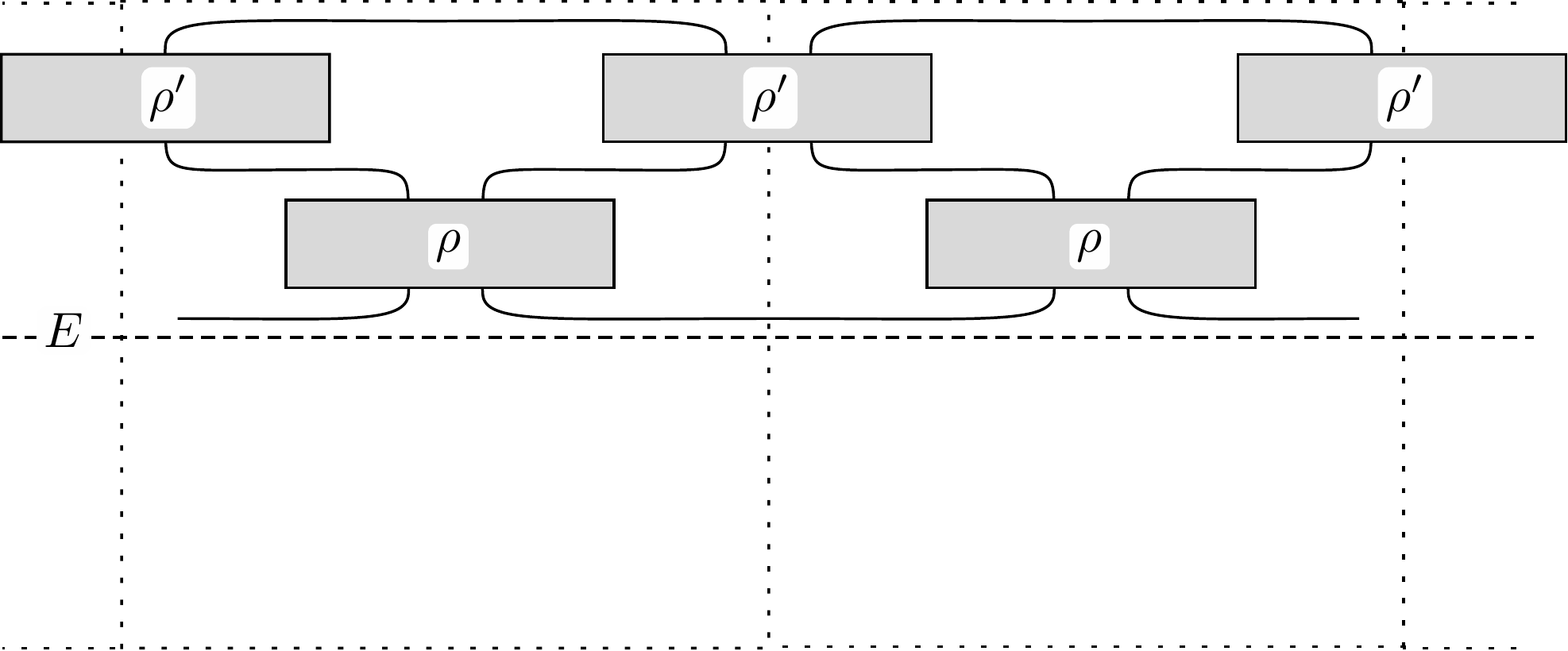}
\caption{Zipper strips are glued into the rectangles $\rho , \rho ^{\prime} \subset  C_{a}$.}
\label{f.zippers}
\end{figure}
In each   rectangle, replace the identity map by the zipper diffeomorphism constructed in Lemma~\ref{l.zipper}.  This gives a diffeomorphism $\zeta   : C \rightarrow  C$.  The composite $\varphi = \zeta   \circ  \phi $ $\epsilon $\nobreakdash-approximates the identity and fixes all points off $C_{a}$.   Every polar path for  $J = \varphi (E)$  must travel through an entire zipper strip and therefore has length $> L$.
\end{proof}

Let $\mathcal{J}_{0}$ be the collection of Jordan curves on the 2-sphere that separate the poles.

\begin{Lemm}
\label{l.lsc}
With respect to the Hausdorff metric, the
  resistance function is lower semi-continuous at smooth Jordan curves in $\mathcal{J}_{0}$.
\end{Lemm}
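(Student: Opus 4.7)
The plan is to argue by contradiction using Arzel\`a--Ascoli. Suppose the lemma fails at a smooth $J \in \mathcal{J}_0$: there is a sequence $J_n \to J$ in Hausdorff metric and $\epsilon > 0$ with $r(J_n) \leq r(J) - \epsilon$ for every $n$. For each $n$, choose a polar path $\gamma_n$ for $J_n$ with $\ell(\gamma_n) \leq r(J_n) + 1/n$, reparametrized at constant speed on $[0,1]$. The family is uniformly Lipschitz, so a subsequence converges uniformly to a Lipschitz path $\gamma \colon [0,1] \to S^2$ from the north pole to the south pole, with $\ell(\gamma) \leq \liminf_n \ell(\gamma_n) \leq r(J) - \epsilon$ by lower semi-continuity of length under uniform convergence.

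Next I would exploit smoothness of $J$: it possesses an embedded tubular neighborhood $U \cong J \times (-\eta,\eta)$ with projection $\pi \colon U \to J$ and signed normal coordinate $h$, so that the two complementary components $D_N, D_S \subset S^2 \setminus J$ (containing the poles) correspond locally to $\{h>0\}$ and $\{h<0\}$. Let $\tau_n$ be the unique time at which $\gamma_n$ meets $J_n$. Passing to a further subsequence, $\tau_n \to \tau$, and $\gamma(\tau) \in J$. A short argument shows that for $n$ large the $N$-component of $S^2 \setminus J_n$ is contained in $D_N \cup U$; letting $\eta$ shrink then yields $\gamma([0,\tau)) \subset \overline{D_N}$ and symmetrically $\gamma((\tau,1]) \subset \overline{D_S}$.

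Finally I would perform a small normal perturbation to make $\gamma$ a genuine polar path for $J$. Wherever $\gamma(t) \in J$ and $t \neq \tau$, push the normal coordinate of $\gamma(t)$ by an amount $\rho > 0$ into $D_N$ if $t < \tau$ and into $D_S$ if $t > \tau$, using a smooth bump function supported on a small neighborhood of the offending times. Because the push is normal, it adds only $O(\rho)$ to the length; for $\rho$ small the perturbed path has length at most $r(J) - \epsilon/2$, meets $J$ at the single point $\gamma(\tau)$, and transversally pierces it there, contradicting the definition of $r(J)$. The main obstacle is carrying out this surgery so that exactly one piercing remains while the added length is genuinely negligible; this step relies essentially on the smoothness of $J$, which supplies the tubular coordinates and signed normal direction required for the perturbation, and explains why the lemma is stated only at smooth curves in $\mathcal{J}_0$.
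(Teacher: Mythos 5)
Your proposal takes a genuinely different route from the paper. The paper's argument is direct and does not take a limit of paths: given a near-minimizing polar path $\gamma_n$ for $J_n$, it smooths and makes $\gamma_n$ transverse to the boundary curves $J_a, J_b$ of an $\epsilon$-tube $N$ around $J$, splits $\gamma_n$ into the sub-arcs lying inside and outside $N$, projects the inside arcs normally onto $J_a$ (above) and $J_b$ (below), and reconnects with the short normal segment through $p_n$, producing an explicit polar path $\rho_n$ for $J$ with $\ell(\rho_n)\lesssim\ell(\gamma_n)$. You instead argue by contradiction, extract a uniformly Lipschitz subsequence via Arzel\`a--Ascoli (valid, since $r(J)<\infty$ for smooth $J$ bounds the Lipschitz constants), locate the limit path $\gamma$ in $\overline{D_N}$ before $\tau$ and $\overline{D_S}$ after, and then repair $\gamma$ by one normal perturbation. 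Both arguments rest squarely on the tubular-neighborhood structure of the smooth $J$. What you gain is that no smoothing or transversality of the $\gamma_n$ is needed and no combinatorial splitting into sub-arcs occurs; what you pay is a compactness step and a global surgery on the limit path.

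The one place that needs tightening is the final perturbation. The set $\{t\neq\tau:\gamma(t)\in J\}$ may accumulate at $\tau$, so ``a smooth bump function supported on a small neighborhood of the offending times'' is not quite right: the push must be strictly positive arbitrarily close to $\tau$ yet vanish at $\tau$, and moreover the added length is controlled by the total variation of the push, not merely its sup $\rho$. A clean fix is to replace the normal coordinate $h(t)$ of $\gamma(t)$ inside the tube by $\max\bigl(h(t),\,c\,|\tau-t|\wedge\tfrac{\eta}{2}\bigr)$ for $t<\tau$ (and the signed analogue for $t>\tau$), leaving $\gamma$ unchanged outside the tube. This vanishes at $\tau$, is strictly positive on $[0,\tau)\cup(\tau,1]$, agrees with $\gamma$ at the tube boundary (continuity), keeps the path in $D_N$ before $\tau$ and $D_S$ after, and since $\mathrm{TV}\bigl(\max(h,\rho)\bigr)\le \mathrm{TV}(h)+\mathrm{TV}(\rho)$ the added length is $O(c)$. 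With that substitution your argument is correct and complete.
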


\begin{proof}[\bf Proof.]
Let   $J_{n}$ be a sequence of Jordan curves in $\mathcal{J}_{0}$ that converges to $J \in \mathcal{J}_{0}$ with respect to  the Hausdorff metric.  If $J$ is smooth we claim that $r(J) \leq  \dliminf_{n\rightarrow \infty} r(J_{n})$.  We refer to points on the south side of a curve in   $  \mathcal{J}_{0}$ as ``below''   and those on the north side as ``above.''

Consider the $\epsilon $-tubular neighborhood $N = N_{\epsilon }$ of $J$.  It is an annulus bounded by smooth Jordan curves $J_{a}$ and $J_{b}$ above and below $J$.    The normals to $J$ give smooth projections $\pi _{a} : N  \rightarrow J_{a}$, $\pi _{b} : N \rightarrow  J_{b}$.  As $\epsilon \rightarrow 0$, the norms of $(D\pi _{a})_{x}$ and $(D\pi _{b})_{x}$ for $x \in N $ tend  uniformly to $1$.  Thus, if  $\nu  $ is a smooth path in $N $ then the length ratio satisfies
$$
\liminf_{\epsilon \rightarrow 0} \frac{\ell(\nu  )}{\ell(\pi _{a}(\nu  ))} \geq  1 \qquad \liminf_{\epsilon \rightarrow 0} \frac{\ell(\nu  )}{\ell(\pi _{b}(\nu  ))} \geq  1 \ ,
$$
uniformly  $\nu  \subset N $.

Fix a small $\epsilon  > 0$ and    choose a polar path $\gamma _{n}$ for $J_{n}$ whose length is approximately $r(J_{n})$.  For large $n$, $J_{n} \subset  N $ and
$J_{n}$ separates the boundary curves  $J_{a}, J_{b}$ of $N $.
By approximation, we can assume that $\gamma _{n}$ is smooth except at the point $p_{n} \in J_{n}$ where it crosses $J_{n}$, and that $\gamma _{n}$ is transverse to $\partial N $.    We form a polar path $\rho _{n} $ for $J$ as follows.  (It will not be much longer than $ \gamma _{n}$.)

The polar path $\gamma _{n}$ for $J_{n}$ goes from the north pole to the south pole, and $J_{n}$ splits it as $\gamma _{n} = \alpha   \cup \beta  $ where $\alpha    $ goes from the north pole to $p_{n}$ and $\beta    $ goes from $p_{n}$ to the south pole.  Since $J_{n}$ separates the boundary curves of $N $,  $\alpha  $ lies above $J_{b}$ and $\beta  $ lies below $J_{a}$.  Transversality implies that $\alpha  $ splits as
$$
\alpha   = \alpha  _{1} \cup \nu  _{1} \cup   \dots \cup \alpha _{k} \cup\nu  _{k}
$$
where each $\alpha  _{j}$ lies above $J_{a}$ and each $\nu  _{j}$ lies in $N $.  The curve
$$
\rho _{a} = \alpha  _{1} \cup \pi _{a}(\nu  _{1}) \cup   \dots \cup \alpha _{k} \cup \pi _{a}(\nu  _{k})
$$
lies above or on $J_{a}$ and has length not much greater than $\ell(\alpha ) $. (In fact, it is likely that $\ell(\rho _{a})$ is much less than $\ell(\alpha )$.)  See Figure~\ref{f.lsc}.
\begin{figure}[htbp]
\centering
\includegraphics[scale=.55]{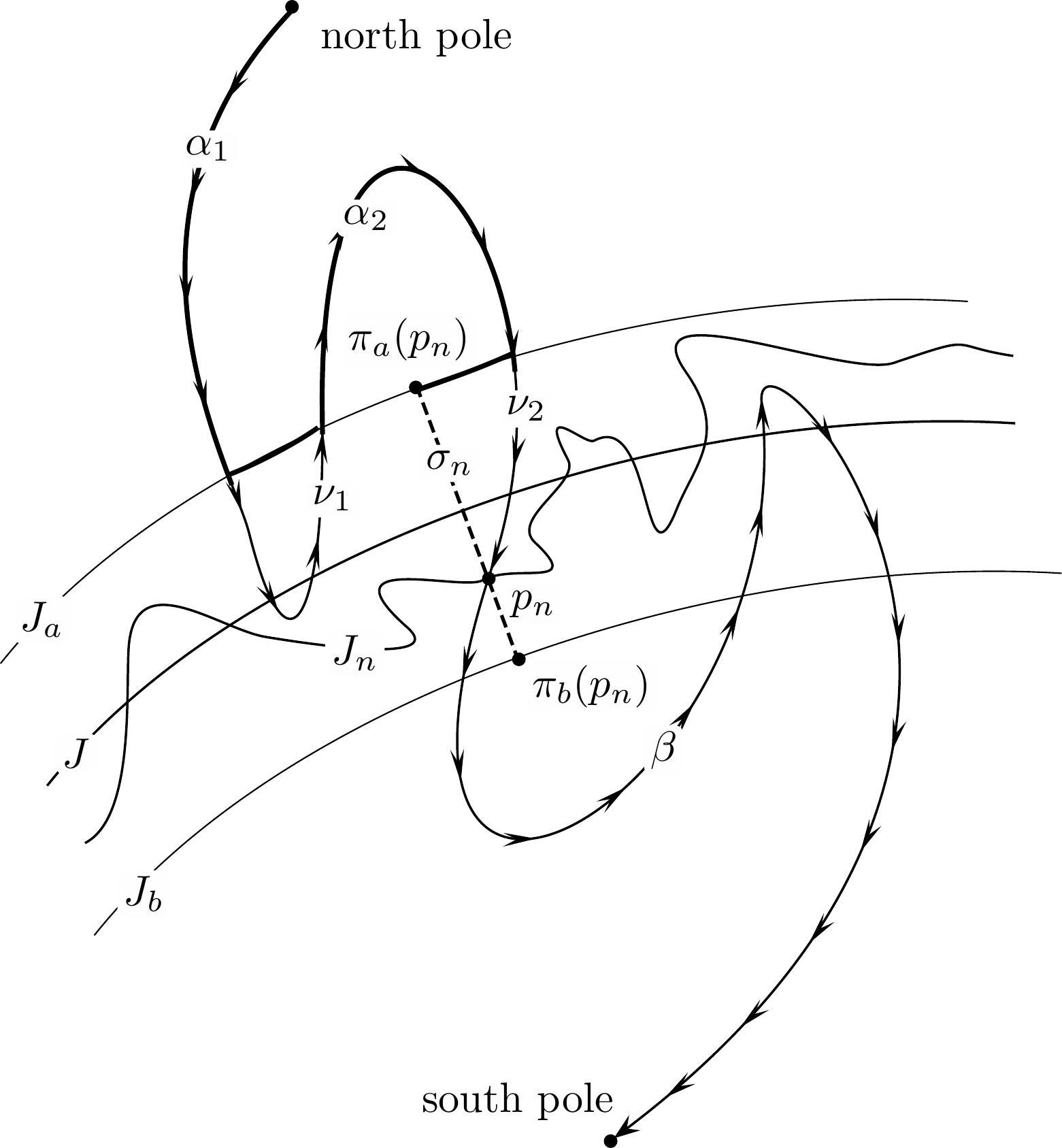}
\caption{$J_{a}$ splits $\alpha $ as $ \alpha  = \alpha _{1} \cup  \nu_{1} \cup \alpha _{2} \cup \nu _{2}$.  The path $\rho _{a}$ is drawn thick.}
\label{f.lsc}
\end{figure}
In the same way we form from $\beta  $ a path $\rho _{b}$ that lies below or on $J_{b}$ and has length not much greater than $\ell (\beta  )$.  The path $\rho _{a}$ ends at the point $\pi _{a}(p_{n})$ while $\rho _{b}$ starts at the point $\pi _{b}(p_{n})$.  Let $\sigma _{n} = [\pi _{a}(p_{n}), \pi _{b}(p_{n})]$ be the normal segment of $N $ that passes through $p_{n}$.   Thus,
$$
\rho _{n} = \rho _{a} \cup \sigma _{n} \cup \rho _{b}
$$
is a polar path  $J$ and its length is not much greater than $\ell (\gamma _{n})$.  It follows that $r(J) \leq \dliminf _{n\rightarrow \infty}r(J_{n})$.
\end{proof}

\begin{Rmk}
The resistance function is not upper semicontinuous.  There exist resistor   curves approximating the equator arbitrarily well that have large resistance.
\end{Rmk}

 \begin{Que}
Is the preceding lemma true without the assumption that $J$ is smooth?  That is, if   $J_{n} \rightarrow J$ in $\mathcal{J}_{0}$ and there are polar paths $\gamma _{n}$ for $J_{n}$ of length $\leq  r$, is there a polar path for $J$ of length $\leq  r + \epsilon $?
\end{Que}

Lemma~\ref{l.lsc} uses the Hausdorff metric on $\mathcal{J}_{0}$.  A finer topology is defined as follows.  Every parameterization of a Jordan curve is an embedding $f : S^{1} \rightarrow  S^{2}$, and every $f$ extends to a homeomorphism $F : S^{2} \rightarrow  S^{2}$.  (We think of the circle as the equator of the sphere.)  The space $\mathcal{H}$ of self-homeomorphisms of the sphere has a natural metric
$$
D(F,G) = \norm{F-G} + \norm{F^{-1} - G^{-1}} \ ,
$$
where $\norm{F_{1}-F_{2}} = \sup \{ \abs{F_{1}(x) - F_{2}(x)} : x \in S^{2}\}$ is   $C^{0}$-distance. 
With respect to $D$, $\mathcal{H}$ is complete, and the subset
$$
\mathcal{H}_{0} = \{ F \in \mathcal{H} : F(S^{1}) \textrm{ separates the poles}\}
$$
is closed in $\mathcal{H}$.

\begin{Lemm}
\label{l.generic}
For the generic $F \in \mathcal{H}_{0}$,   $  F(S^{1})$    has infinite resistance.
\end{Lemm}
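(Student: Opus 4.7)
My plan is to apply the Baire category theorem in the complete metric space $(\mathcal{H}_0, D)$. For each $L > 0$ set
\[
U_L = \{F \in \mathcal{H}_0 : r(F(S^1)) > L\}, \qquad W_L = \operatorname{int} U_L.
\]
If each $W_L$ is dense, then $\bigcap_{n\in\mathbb{N}} W_n$ is a dense $G_\delta$ whose points $F$ satisfy $r(F(S^1)) = \infty$, proving the lemma. So the task reduces to showing that each $W_L$ is dense in $\mathcal{H}_0$.

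Let $\mathcal{S} \subset \mathcal{H}_0$ consist of those $F$ for which $F(S^1)$ is a smooth Jordan curve, and set $\mathcal{S}_L = \mathcal{S} \cap U_L$. I would prove (i) $\mathcal{S}_L \subset W_L$ and (ii) $\mathcal{S}_L$ is dense in $\mathcal{H}_0$; together these give density of $W_L$. For (i), $D$-convergence $F_n \to F_0$ forces Hausdorff convergence of the image curves $F_n(S^1) \to F_0(S^1)$, and $F_0 \in \mathcal{S}$ guarantees the limit curve is smooth, so Lemma~\ref{l.lsc} gives $r(F_0(S^1)) \le \liminf_n r(F_n(S^1))$. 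Hence the condition $r(F_0(S^1)) > L$ propagates to a $D$-neighborhood of $F_0$, showing $F_0 \in W_L$.

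The bulk of the work is step (ii). Given $F \in \mathcal{H}_0$ and $\epsilon > 0$, I would first approximate the Jordan curve $F(S^1)$ in the Hausdorff metric by a smooth Jordan curve $J'$ and use a quantitative Jordan--Schoenflies construction to produce a near-identity self-homeomorphism $\eta$ of $S^2$ with $\eta(F(S^1)) = J'$; then $\tilde F = \eta \circ F$ lies in $\mathcal{S}$ with $D(\tilde F, F) < \epsilon/2$. Next I would carry out a localized version of Lemma~\ref{l.zippers} on $\tilde F(S^1)$: pick a thin tubular neighborhood $N$ of $\tilde F(S^1)$, choose a smooth diffeomorphism $\Phi : N \to S^1 \times [-\delta, \delta]$ of bounded length distortion $\le C$, apply Lemma~\ref{l.zippers} in the flat model with resistance parameter $L' > CL$ to obtain a near-identity diffeomorphism $\varphi_0$ of the model, and transport $\varphi_0$ back through $\Phi$ to a diffeomorphism $\varphi$ of $S^2$ supported in $N$ and $(\epsilon/2)$-close to the identity in $D$. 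The map $G = \varphi \circ \tilde F$ then lies in $\mathcal{S}$ with $D(G,F) < \epsilon$ and, by construction, every polar path for $G(S^1)$ must fully traverse one of the zipper strips sitting in $N$, so has length exceeding $L'/C > L$.

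The main obstacle is this transport step: after pushing Lemma~\ref{l.zippers} through a tubular neighborhood of an arbitrary smooth Jordan curve, I must still show that the resulting $G(S^1)$ has resistance $> L$. The delicate point is to rule out polar paths that might evade the zippers while still piercing $G(S^1)$ exactly once. I would handle this by covering $G(S^1)$ entirely by the zipper strips and using the transverse product structure of $N$ to force any polar path to enter and exit some zipper through its top and bottom, activating the length lower bound of Lemma~\ref{l.zipper}.
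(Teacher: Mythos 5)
Your proof follows the same route as the paper's: show that each set $\{F \in \mathcal{H}_0 : r(F(S^1)) > L\}$ contains an open dense subset, obtaining openness from the lower semicontinuity of resistance at smooth curves (Lemma~\ref{l.lsc}) and density by first approximating $F$ by a map with smooth image and then applying the zipper construction of Lemma~\ref{l.zippers} in a tubular neighborhood of that smooth image. The only deviations are cosmetic: the paper approximates $F$ directly by a diffeomorphism of $S^{2}$ rather than Hausdorff-approximating the image curve and invoking a quantitative Schoenflies extension, and the ``transport'' worry at the end is unnecessary -- the connected piece of a polar path for the perturbed curve inside the tubular neighborhood that contains the unique crossing is itself a polar path for the model curve in the cylinder, so the resistance bound from Lemma~\ref{l.zippers} passes to $S^{2}$ with only the bounded length distortion of the tubular-neighborhood chart.
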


\begin{proof}[\bf Proof.]
It suffices to check that for every $L > 0$,
$$
\mathcal{H}_{0}(L) = \{ F \in \mathcal{H} :   F(S^{1}) \textrm{ has resistance $> L$} \}
$$
contains an open dense subset.  Let $F_{0} \in \mathcal{H}_{0}$ be given.  It can be approximated in $\mathcal{H}_{0}$ by a diffeomorphism $F_{1}$.  The tubular neighborhood of the smooth Jordan curve $J_{1} = F_{1}(S^{1})$ is diffeomorphic to the cylinder, so Lemma~\ref{l.zippers} provides a diffeomorphism $F_{2}$ that  approximates the identity  and   $J = F_{2}(J_{1})$ has resistance $> L$.  Then $F = F_{2} \circ  F_{1}$ approximates $F_{1}$ and lies in $\mathcal{H}_{0}(L)$.  Hence $\mathcal{H}_{0}(L) \cap C^{\infty}$ is dense in $\mathcal{H}_{0}$.  For each $F \in \mathcal{H}_{0}(L) \cap C^{\infty}$,   $J = F(S^{1})$ is smooth, so    Lemma~\ref{l.lsc}   implies that for all $G \in \mathcal{H}_{0}$ near $F$, $G(S^{1})$ has resistance $> L$.  That is, $\mathcal{H}_{0}(L)$ contains a neighborhood of $F$.  Hence $\mathcal{H}_{0}(L)$ contains an open dense subset of $\mathcal{H}_{0}$, and $\bigcap_{L\in \mathbb{N}} \mathcal{H}_{0}(L)$ is residual; that is, for the generic $F \in \mathcal{H}_{0}$, $J = F(S^{1})$ has infinite resistance.
\end{proof}

\begin{proof}[\bf Proof of the Resistance Theorem.]
Since residual subsets of a complete nonempty metric space are nonempty, Lemma~\ref{l.generic} provides many Jordan curves of infinite resistance.
\end{proof}

\begin{Rmk}
A Jordan curve $J$ of infinite resistance is nowhere  smoothly pierceable.  For if $\nu  $ is   a smooth path piercing $J$  then we can choose a smooth path  $ \alpha  $ from    the north pole to one endpoint of $\nu  $, and a smooth path $\beta  $ from the other endpoint of $\nu  $ to the south pole, such that $\alpha  $ and $\beta  $ are disjoint from $J$.  Then the combined path $\alpha  \cup \nu   \cup \beta  $ is polar with finite length, contradicting $r(J) = \infty$.
\end{Rmk}

\begin{Rmk}
There is nothing special about the poles of the sphere.  For any distinct $p, q \in S^{2}$ we can consider the set $\mathcal{H}_{pq}$ of homeomorphisms $F \in \mathcal{H}$ such that $F(S^{1})$ separates $p$ from $q$.  Letting $p, q$ vary in a countable dense subset of the sphere, we infer a stronger looking version of the Resistance Theorem.
\end{Rmk}

\begin{Thm}
\label{t. }
For the generic $F \in \mathcal{H}$, the Jordan curve $J = F(S^{1})$ offers infinite resistance to all paths piercing it.   
\end{Thm}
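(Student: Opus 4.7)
The plan is to localize the proof of Lemma~\ref{l.generic} to an arbitrary pair of points on the sphere, as suggested in the Remark preceding the theorem, and then quantify over a countable dense family of such pairs. Fix a countable dense set $D\subset S^{2}$. For distinct $p,q\in D$ and $L\in\mathbb{N}$, let $\mathcal{H}_{pq}\subset\mathcal{H}$ denote the closed subset on which $F(S^{1})$ separates $p$ from $q$, and let $r_{pq}(F(S^{1}))$ denote the infimum of lengths of paths from $p$ to $q$ that pierce $F(S^{1})$ exactly once. Set
$$
\mathcal{G}_{L}(p,q) = \{F\in\mathcal{H} : F\notin\mathcal{H}_{pq} \text{ or } r_{pq}(F(S^{1})) > L\}.
$$
The goal is to show that each $\mathcal{G}_{L}(p,q)$ contains an open dense subset of $\mathcal{H}$, so that the intersection over the countable family of triples $(L,p,q)$ is residual in $\mathcal{H}$.

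For density, let $F_{0}\in\mathcal{H}$. If $F_{0}\notin\mathcal{H}_{pq}$, then $F_{0}$ already lies in the open set $\mathcal{H}\setminus\mathcal{H}_{pq}\subset\mathcal{G}_{L}(p,q)$. Otherwise $F_{0}\in\mathcal{H}_{pq}$, and the argument of Lemma~\ref{l.generic}---with $p, q$ replacing the poles, which leaves Lemmas~\ref{l.zippers} and \ref{l.lsc} intact since neither uses anything special about the polar axis---produces a smooth $F\in\mathcal{H}_{pq}$ arbitrarily close to $F_{0}$ with $r_{pq}(F(S^{1}))>L$. Openness of $\mathcal{G}_{L}(p,q)$ at each such smooth $F$ follows from the $(p,q)$-analog of Lemma~\ref{l.lsc}, together with the fact that separation of $p$ from $q$ is preserved under small perturbations in the metric $D$ (control of both $F^{-1}$ and $F$ keeps $p,q$ in corresponding complementary components). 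Thus $\mathcal{G}_{L}(p,q)$ contains an open dense subset $\mathcal{U}_{L}(p,q)$, and $\mathcal{R} = \bigcap_{L,p,q}\mathcal{U}_{L}(p,q)$ is residual in $\mathcal{H}$.

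It remains to verify that $F\in\mathcal{R}$ implies $J = F(S^{1})$ admits no piercing path of finite length. Suppose $\nu$ is such a path, joining endpoints $a$ and $b$ in the two complementary components $U, V$ of $J$. Density of $D$ and openness of $U, V$ yield $p\in D\cap U$ and $q\in D\cap V$ arbitrarily close to $a$ and $b$, joinable to $a$ and $b$ by arcs in $U$ and $V$ of arbitrarily small length. Concatenating these short arcs with $\nu$ produces a path from $p$ to $q$ that pierces $J$ only at the original piercing point of $\nu$, of length at most $\ell(\nu)+\epsilon$. Hence $r_{pq}(J)\le\ell(\nu)<\infty$, and for any integer $L>\ell(\nu)$ this contradicts $F\in\mathcal{U}_{L}(p,q)\subset\mathcal{G}_{L}(p,q)$. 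The main subtlety lies not in the resistance-theoretic work, which is a verbatim transfer of Lemma~\ref{l.generic}, but in this closing step: ensuring that arbitrary finite-length piercing paths can be promoted to $(p,q)$-polar paths without appreciable length cost, which rests on the openness of the complementary components of $J$.
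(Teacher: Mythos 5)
Your proposal is correct and follows exactly the route the paper indicates: the paper offers no formal proof of this theorem, only the preceding remark that one should let $p,q$ range over a countable dense set and transfer Lemma~\ref{l.generic} from the poles to $(p,q)$, which is precisely what you carry out. Your closing step, promoting an arbitrary finite-length piercing path to a $(p,q)$-polar path via short connecting arcs in the open complementary components, is the part the paper leaves tacit and is handled correctly; the aside about separation being preserved under small $D$-perturbations is true but not actually needed, since a nearby $G$ with $G(S^1)$ not separating $p,q$ lies in $\mathcal{G}_L(p,q)$ vacuously.
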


\section{One Dimension Up}
\label{s.oneup}

The outward spiral together with its limit circle, $\overline{S}$, is  not a funnel section of any continuous $2$-dimensional ODE, and in fact it is not a funnel section of any continuous $m$-dimensional ODE.  Raising the permitted dimension has no effect on this property of $\overline{S}$.   However, some funnel questions  get easier if the dimension can be increased.

\begin{Thm}
\label{t.proj}

The image of a funnel section under projection is a funnel section.

\end{Thm}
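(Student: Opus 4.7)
The plan is to reduce via the Patching Lemma to the case $K = K_{1}(0, y_{0}, f)$ with $f$ supported in $[0,1] \times \mathbb{R}^{m}$, and then construct a continuous $g : [0,1] \times \mathbb{R}^{k} \rightarrow \mathbb{R}^{k}$ with compact support such that $K_{1}(0, \pi(y_{0}), g) = \pi(K)$.  One may take $\pi $ to be projection onto the first $k$ coordinates, $\pi(x,z) = x$, and write $y_{0} = (x_{0}, z_{0})$.

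The natural target solution family comes from the projected trajectories $\xi (t) = \pi (y(t))$, where $y(\cdot)$ ranges over solutions of the original ODE with $y(0) = y_{0}$.  Each $\xi $ is a continuous curve from $x_{0}$ to a point of $\pi (K)$, and collectively these curves fill out a continuum $\pi (F) \subset [0,1]\times \mathbb{R}^{k}$ whose time-$1$ slice is $\pi (K)$.  The goal is to build $g$ so that the solutions of $x^{\prime} = g(t,x)$ starting at $x_{0}$ at $t=0$ trace, up to reparameterization, exactly this family.

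The main obstacle is that the projected curves $\xi $ do not form the solution family of any classical ODE on $\mathbb{R}^{k}$: two of them can pass through the same point $(t,x)$ with distinct velocities $\pi (f(t,y))$ coming from different preimages $y$ above $x$.  One is therefore led to the multivalued shadow field
$$
V(t,x) \ = \ \{\pi (f(t,y)) : (t,y) \in F,\ \pi (y) = x\} \ ,
$$
which has nonempty compact connected values on $\pi (F)$.  The strategy is to replace $V$ by a continuous single-valued $g$ whose own (generally non-unique) solution funnel from $x_{0}$ matches $\pi (F)$, in the same spirit that underlies Kneser's theorem and the constructions behind result (b) of the introduction.

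The replacement step is the heart of the argument.  One approach is to mollify $V$ off $\pi (F)$, take a continuous selection along $\pi (F)$, and graft in $2\abs{y}^{1/2}$-type spreading perturbations at the branch loci of the projected flow, so that every $x_{1} \in \pi (K)$ is reached at time $1$ and no point outside $\pi (K)$ is.  The delicate issue is two-sided: the spreading must be generous enough that the full set $\pi (K)$ is realized, and restricted enough that the funnel section of $g$ does not overflow $\pi (K)$.  Verifying that $g$ can be arranged continuously with exactly this balance is where the real work lies.
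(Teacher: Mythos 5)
You have read the theorem as asserting that $\pi(K)$, regarded as a subset of $\mathbb{R}^{k}$, is a funnel section of a $k$-dimensional ODE, and you propose to build a $k$-dimensional field on the target of $\pi$. The paper proves and uses something weaker but cleaner: $\pi(K)$, regarded as $\pi(K)\times\{0\}\subset\mathbb{R}^{m}$, is a funnel section of another \emph{$m$-dimensional} ODE. Under that reading the proof is almost trivial. Keep the original field $f$, with compact support in $[0,1]\times\mathbb{R}^{m}$ and with $K=K_{1}(0,p,f)$ inside the unit cube; then during $[1,2]$ add the field $g(t,x^{m})\,\partial/\partial x^{m}$, tangent to the last coordinate direction, supported in $[1,2]\times\mathbb{R}$, whose forward flow crushes the interval $[-1,1]$ on the $x^{m}$-axis to $0$ by time $2$. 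Every $f$-trajectory arriving at $K$ at $t=1$ then slides down to $x^{m}=0$ while keeping its other coordinates fixed, so $K_{2}(0,p,f+g)=\pi(K)\times\{0\}$. Higher codimension is an induction. There is no projected dynamics, no multivalued shadow field, no selection, and the hard issue you run into never arises.

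Your version is genuinely stronger, and the ``real work'' you flag at the end is not a technical loose end but the entire obstruction; the proposal leaves it unresolved, so it has a gap. To see that the gap is structural and not just hard, run your reading through the Corollary that follows this theorem: a planar Jordan curve $J$ is lifted to an arc $K\subset\mathbb{R}^{3}$ whose complement is diffeomorphic to that of a point, and then $\pi(K)=J$ is obtained by projection. Under the paper's reading this gives $J$ as a funnel section of a $3$-dimensional ODE. Under your reading the very same step would make \emph{every} planar Jordan curve a funnel section of a $2$-dimensional ODE, which is exactly the open question posed in the introduction and answered only partially in Theorem~\ref{t.pfunnel}. Concretely, nothing in the shadow-field construction forces the $g$-funnel to fill out all of $\pi(K)$ at the branch loci of $\pi|_{F}$ without overflowing it; this two-sided control is precisely the delicacy that makes the funnel-section property nontopological (witness the closed outward spiral, which is a Peano-like continuum yet not a funnel section).
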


\begin{proof} 
There is a continuous function $g(t,x)$ on $\mathbb{R}^{2}$ such that the trajectories of $x^{\prime} = g$ are as in Figure~\ref{f.Psiflow}: All trajectories $x(t)$ that begin in the interval $[-1,1]$ at time $t=1$ end at $0$ by time $t=2$.  
The support of $g$ is compact and contained in $[1,2] \times \mathbb{R}$.  This gives  a local projection.  

Suppose that $K = K_{1}(0,p,f)$ is a subset of the unit $m$-cube $Q$ for some continuous $f = f(t, x^{1}, \dots  , x^{m})$ with compact support in $[0,1] \times  \mathbb{R}^{m}$.  Let $\pi  : \mathbb{R}^{m} \rightarrow  \mathbb{R}^{m-1}$ be the projection that kills the span of the last variable $x^{m}$.  Then 
$$
\pi (K) = K_{2}(0,p,f+g)
$$
where $g$ is the vector field on $\mathbb{R}^{m}$,
$$
(0, \dots , 0, g(t, x^{m})) = g(t, x^{m}) \frac{\partial }{\partial x^{m}} \ .
$$
Projections into higher codimension subspaces are handled by induction.
\end{proof}

\begin{Cor*}

Every planar Jordan curve is a funnel section of $3$-dimensional ODE.

\end{Cor*}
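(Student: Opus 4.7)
The plan is to combine Theorem~\ref{t.proj} with result (b) of Section~\ref{s.intro}.  Given a planar Jordan curve $J \subset \mathbb{R}^{2}$, I will lift it to a tame arc $\tilde J \subset \mathbb{R}^{3}$ whose projection to the first two coordinates is $J$ and whose complement is diffeomorphic to $\mathbb{R}^{3}\setminus \{0\}$.  Then (b) will make $\tilde J$ a funnel section of a $3$-dimensional ODE, and Theorem~\ref{t.proj} will make $J = \pi(\tilde J)$ one too.

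Concretely, fix a parameterization $\gamma : [0,1] \rightarrow \mathbb{R}^{2}$ of $J$ with $\gamma(0) = \gamma(1)$ and $\gamma|_{[0,1)}$ injective, and define
$$
\tilde J \;=\; \bigl\{(\gamma(t),t) : t \in [0,1]\bigr\} \;\subset\; \mathbb{R}^{3}.
$$
Because the $z$-coordinate is injective along $\tilde J$, the set $\tilde J$ is an arc.  The projection $\pi : \mathbb{R}^{3} \rightarrow \mathbb{R}^{2}$ that kills the $z$-coordinate satisfies $\pi(\tilde J) = J$.  Extending $\gamma$ to a continuous $\bar\gamma : \mathbb{R} \rightarrow \mathbb{R}^{2}$ by making it constant outside $[0,1]$ (legal since $\gamma(0) = \gamma(1)$), the shear
$$
\Psi(x,y,z) \;=\; \bigl(x - \bar\gamma_{1}(z),\, y - \bar\gamma_{2}(z),\, z\bigr)
$$
is a self-homeomorphism of $\mathbb{R}^{3}$ that carries $\tilde J$ onto the straight segment $I = \{0\} \times \{0\} \times [0,1]$.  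Therefore $\tilde J$ is a tamely embedded arc, and $\mathbb{R}^{3}\setminus \tilde J$ is homeomorphic to $\mathbb{R}^{3}\setminus I$, which is in turn diffeomorphic to $\mathbb{R}^{3}\setminus\{0\}$ (both are smoothly $S^{2}\times\mathbb{R}$).

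Applying (b) to $\tilde J$ then shows that $\tilde J$ is a funnel section of a $3$-dimensional ODE, and Theorem~\ref{t.proj} delivers that $J = \pi(\tilde J)$ is one too, proving the corollary.  The only delicate point is the word \emph{diffeomorphic} in the hypothesis of (b): since $\gamma$ is only continuous, $\Psi$ need not be smooth, so a priori the identification $\mathbb{R}^{3}\setminus \tilde J \cong \mathbb{R}^{3}\setminus I$ is only topological.  One upgrades it to a diffeomorphism either by appealing to uniqueness of smooth structure on open $3$-manifolds, or by writing down an explicit smooth identification of both complements with $S^{2}\times \mathbb{R}$ via smooth radial coordinates centered on $\tilde J$ and on the origin respectively.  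This upgrade is the sole obstacle in an otherwise completely mechanical argument.
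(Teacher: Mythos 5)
Your proof is essentially the paper's: both lift $J$ to the graph arc $\{(\gamma(t),t)\}$ in $\mathbb{R}^{3}$, invoke (b) to get that this arc is a funnel section because its complement is diffeomorphic to $\mathbb{R}^{3}\setminus\{0\}$, and then project down via Theorem~\ref{t.proj}. The only difference is that you spell out the tameness (via the shear) and flag the homeomorphism-to-diffeomorphism upgrade, which the paper states without comment; your appeal to uniqueness of smooth structures on open $3$-manifolds is the right way to close that gap (the alternative you sketch, smooth radial coordinates around $\tilde J$, would not work directly since $\tilde J$ is merely a continuous arc).
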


\begin{proof} Let $h : [0,2\pi ] \rightarrow  J$ parametrize the Jordan curve $J \subset  \mathbb{R}^{2}$, and define $g : [0,2\pi ] \rightarrow  \mathbb{R}^{3}$ by
$$
g(\theta ) = (h(\theta ), \theta ) \ .
$$
$g([0,2\pi ])$ is an arc $K$ in $\mathbb{R}^{3}$.  Its complement is diffeomorphic to the complement of a point, so it is a funnel section of a $3$-dimensional ODE.  
Theorem~\ref{t.proj} implies that $\pi (K) = J$ is a funnel section of a $3$-dimensional ODE.
\end{proof}

In fact, we have established something a bit more general.

\begin{Thm*}
Peano continua in $\mathbb{R}^{m}$ are funnel sections in one dimension up.
\end{Thm*}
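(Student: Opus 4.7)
The plan is to imitate the proof of the preceding Corollary verbatim, replacing the Jordan-curve parametrization by a Hahn--Mazurkiewicz surjection. Given a Peano continuum $P \subset \mathbb{R}^{m}$, the Hahn--Mazurkiewicz theorem supplies a continuous surjection $h : [0,1] \rightarrow P$. I would then define $g : [0,1] \rightarrow \mathbb{R}^{m+1}$ by $g(\theta) = (h(\theta),\theta)$ and set $K = g([0,1])$. Because the last coordinate already separates points of $[0,1]$, $g$ is a continuous injection on a compact interval, hence a homeomorphism onto $K$; in particular $K$ is an arc in $\mathbb{R}^{m+1}$.

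Next, I would invoke fact (b) of Section~\ref{s.intro} to conclude that $K$ is a funnel section of an $(m+1)$-dimensional ODE. The required hypothesis --- that $\mathbb{R}^{m+1} \setminus K$ is diffeomorphic to $\mathbb{R}^{m+1} \setminus \{0\}$ --- is the same input already tacitly used in the Corollary, and holds because $K$ is the graph of a continuous function over the last coordinate: the shear $(x,t) \mapsto (x - \tilde{h}(t), t)$, where $\tilde{h}$ is a continuous extension of $h$ to $\mathbb{R}$, straightens $K$ to a line segment, and a smooth collapse of that segment on its complement finishes the identification with $\mathbb{R}^{m+1} \setminus \{0\}$. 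Finally, I would apply Theorem~\ref{t.proj} to the projection $\pi : \mathbb{R}^{m+1} \rightarrow \mathbb{R}^{m}$ that kills the last coordinate. Since $\pi(K) = P$, Theorem~\ref{t.proj} gives $P$ as a funnel section of an $(m+1)$-dimensional ODE, which is the desired conclusion.

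The one non-formal step, and hence the main obstacle, is the tameness check: one needs the complement of the graph-arc $K$ to be diffeomorphic --- not just homeomorphic --- to the punctured Euclidean space. For $m+1 \geq 3$ this is standard for graph-type arcs, and for $m=1$ the claim is essentially trivial since Peano continua in $\mathbb{R}$ are already compact intervals and are funnel sections even without raising the dimension, by (c) of Section~\ref{s.intro}. So the substantive content of the theorem is really the Hahn--Mazurkiewicz reduction in ambient dimension $m \geq 2$, and once that reduction is in place the argument is identical to that of the Corollary.
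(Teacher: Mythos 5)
Your proposal is correct and follows essentially the same route as the paper: Hahn--Mazurkiewicz surjection, the graph map $\theta \mapsto (h(\theta),\theta)$ producing an arc $K\subset\mathbb{R}^{m+1}$, the appeal to fact (b) via the tameness of $K$, and finally Theorem~\ref{t.proj} applied to $\pi(K)=P$. Your extra discussion of why $\mathbb{R}^{m+1}\setminus K$ is diffeomorphic to $\mathbb{R}^{m+1}\setminus\{0\}$ (and the aside about $m=1$) just makes explicit what the paper's one-line proof and the Jordan-curve Corollary take for granted.
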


\begin{proof}[\bf Proof]
A Peano continuum $X$ is   the continuous image of an interval.  (Equivalently, by the Hahn-Mazurkiewicz Theorem a Peano continuum is a compact Hausdorff space which is   connected and locally connected.)   Jordan curves are Peano continua.  If $h : [0,2\pi ] \rightarrow  X \subset \mathbb{R}^{m}$ is a continuous surjection then $\theta  \mapsto g (\theta ) = (h(\theta ), \theta )$ is a homeomorphism from the interval to an arc $K \subset  \mathbb{R}^{m+1}$.  The latter is a funnel section of an $(m+1)$-dimensional ODE, and by Theorem~\ref{t.proj},  so is $X = \pi (K)$.
\end{proof}

\begin{Cor*}
The Hawaiian earring is a funnel section of a $3$-dimensional ODE.
\end{Cor*}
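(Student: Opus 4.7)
The plan is to reduce the corollary immediately to the preceding theorem: since it asserts that every Peano continuum in $\mathbb{R}^{m}$ is a funnel section of an $(m+1)$-dimensional ODE, with $m=2$ it suffices to verify that the Hawaiian earring $H \subset \mathbb{R}^{2}$ is a Peano continuum. Equivalently, by the Hahn--Mazurkiewicz criterion used in the proof above, I need to exhibit a continuous surjection $h:[0,2\pi]\to H$.

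To produce one, write $H = \bigcup_{n \geq 1} C_{n}$, where $C_{n}$ is the circle of radius $1/n$ centered at $(1/n,0)$, and observe that every $C_{n}$ passes through the origin. I would partition $[0,2\pi)$ into consecutive closed intervals $I_{n} = [\,2\pi(1 - 1/n),\, 2\pi(1 - 1/(n+1))\,]$, $n\geq 1$, and on each $I_{n}$ let $h$ parametrize $C_{n}$ once at constant speed, starting and ending at the origin; set $h(2\pi) = (0,0)$. Then $h$ is continuous on each $I_{n}$ by construction, agrees at the shared endpoints (all of which map to the origin), and is continuous at $2\pi$ because $\operatorname{diam}(C_{n}) = 2/n \to 0$; surjectivity is clear from how the $C_{n}$ are distributed among the $I_{n}$.

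Given this $h$, the proof of the preceding theorem lifts $H$ to the arc $K = \{(h(\theta),\theta) : \theta \in [0,2\pi]\} \subset \mathbb{R}^{3}$, whose complement is diffeomorphic to $\mathbb{R}^{3} \setminus \{0\}$ and which is therefore a funnel section of a $3$-dimensional ODE by result (b) of Section~\ref{s.intro}. Theorem~\ref{t.proj} then identifies the projection $\pi(K) = H$ as a funnel section of a $3$-dimensional ODE. The only potentially subtle step is the continuity of $h$ at $2\pi$, which is essentially a manifestation of the local connectedness of $H$ at the origin; I expect no further obstacle.
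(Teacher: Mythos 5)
Your proof follows the paper's route exactly: it reduces the corollary to the preceding theorem by checking that the Hawaiian earring is a Peano continuum. The paper states this in a single line, whereas you supply an explicit Peano parametrization $h$; your construction is correct, with continuity at $2\pi$ handled properly because the diameters of the circles $C_n$ tend to $0$.
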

\begin{proof}[\bf Proof]
The Hawaiian earring is a planar Peano continuum. 
\end{proof}

\begin{Rmk}
It is not hard to show directly that the Hawaiian earring is also a funnel section of a $2$-dimensional ODE.  
\end{Rmk}

\begin{Thm}
\label{t.union}

If a continuum is a union of two funnel sections then it is   a funnel section in one dimension up.

\end{Thm}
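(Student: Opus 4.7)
The plan is to lift $K_1\cup K_2$ to a continuum $\tilde K\subset\mathbb{R}^{m+1}$ that is itself a funnel section of an $(m+1)$-dimensional ODE and whose vertical projection $\pi\colon(y,z)\mapsto y$ equals $K_1\cup K_2$; Theorem~\ref{t.proj} will then deliver the result. Since $K_1\cup K_2$ is a continuum the closed sets $K_1,K_2$ must meet, so I fix $q\in K_1\cap K_2$. By the Patching Lemma, each $K_i$ equals $K_1(0,q,f_i)$ for a continuous $f_i$ with compact support in $[0,1]\times\mathbb{R}^m$, and these $f_i$ may additionally be arranged so that $f_i(t,q)\equiv 0$, keeping the constant trajectory $y\equiv q$ available throughout. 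The natural lift to aim for is
\[
\tilde K=(K_1\times\{0\})\cup(\{q\}\times[0,1])\cup(K_2\times\{1\}),
\]
a continuum in $\mathbb{R}^{m+1}$ whose projection to $\mathbb{R}^m$ is $K_1\cup K_2$.

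To realize $\tilde K$ as a funnel I would build a continuous vector field on $\mathbb{R}\times\mathbb{R}^{m+1}$ in three patched phases, starting at $(q,0)$ at time $0$. Phase~1 on $[0,1]$ uses $(y',z')=(f_1(t,y),0)$, so $z$ stays at $0$ and $y$ fans to $K_1$, producing the funnel section $K_1\times\{0\}$. Phase~2 on $[1,2]$ uses $(y',z')=(0,\chi(y)h(z))$, where $\chi$ is a continuous bump with $\chi(q)=1$ supported near $q$ and $h(z)=2\pi\sqrt{z(1-z)}$ (extended by $0$ off $[0,1]$) is a Peano-type field vanishing at $z=0,1$; along the trajectory still sitting at $y=q$ the $z$-coordinate fans over $[0,1]$, contributing $\{q\}\times[0,1]$. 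Phase~3 on $[2,3]$ uses $(y',z')=(\tilde\chi(z)f_2(t-2,y),0)$, where $\tilde\chi$ is a continuous bump around $z=1$ with $\tilde\chi(1)=1$; along the trajectory at $(q,1)$, $y$ fans via $f_2$ to $K_2$, contributing $K_2\times\{1\}$. The Patching Lemma assembles the three phases into a single continuous compactly supported $(m+1)$-dim vector field whose terminal funnel $\tilde F$ contains $\tilde K$.

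The argument concludes by verifying $\pi(\tilde F)=K_1\cup K_2$ and applying Theorem~\ref{t.proj} to project $\tilde F\subset\mathbb{R}^{m+1}$ down to $\mathbb{R}^m$ as a funnel section of an $(m+1)$-dim ODE. The main obstacle is precisely this projection identity: continuity of $\chi$ and $\tilde\chi$ forces them to take intermediate values in $(0,1)$ on points of $K_1$ near (but unequal to) $q$ and on $z$ near $1$, so Phase~2 unavoidably creates extra funnel points $(p,z)$ with $p\in K_1$ near $q$ and $z\in(0,1]$, and Phase~3 may then fire a scaled $f_2$-dynamics on these extras which could in principle generate endpoints outside $K_1\cup K_2$. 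Handling this will require refining the Patching Lemma output so that the support of $f_2$ meets $K_1$ only in a small neighborhood of $q$, together with a monotonicity argument for Peano-type reachable sets under time rescaling showing that every such scaled $f_2$-trajectory starting near $q$ projects into $K_1\cup K_2$.
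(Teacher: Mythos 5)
You correctly identify the overall strategy of the paper's proof: lift $K_1\cup K_2$ to a continuum in $\mathbb{R}^{m+1}$ of the form ``copy of $K_1$ at one $z$-level, copy of $K_2$ at another, joined by a vertical segment through the common point,'' realize the lift as a funnel section, and invoke Theorem~\ref{t.proj}. You also correctly pin down the fatal issue in your own phased construction. But the obstacle you acknowledge in your last paragraph is not a technical loose end to be patched later; it is the crux of the whole theorem, and the fix you gesture at would not work. The problem is intrinsic to the ordering of your phases: by fanning out to $K_1\times\{0\}$ first and only afterwards fanning in the $z$-direction via a $y$-dependent bump $\chi$, you necessarily send points of $K_1$ near $q$ to intermediate $z$-levels, and then Phase~3's $z$-dependent bump $\tilde\chi$ fires a time-rescaled $f_2$-dynamics on them. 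Funnel sections at rescaled times are not in general subsets of the original funnel section, and there is no monotonicity that saves you; shrinking the support of $\chi$ or $\tilde\chi$ only moves the bad set, it never removes it, because $\chi$ and $\tilde\chi$ are continuous and must take intermediate values somewhere.

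The paper avoids this mixing entirely by reversing the order of operations and by a more delicate use of the extra coordinate. It first spends a preliminary time interval $[-1,0]$ spreading the single initial point into a broken line $L_0$ in the $z$-direction --- a trivial one-dimensional funnel --- \emph{before} either of the dynamics $f$ or $g$ is switched on. Then, on $[0,1]$, the constructed vector field $h$ has identically zero $z$-component, so the funnel decomposes as a $z$-parametrized family of independent $m$-dimensional funnels. The heart of the argument is the continuous average
$$
h(t,x,z)=\beta(t,x,z)\,f(t,a(t))+(1-\beta(t,x,z))\,f(t,x),
$$
where $a(t)$ is a fixed $f$-solution from $p$ to the common point $c$ and $\beta$ is a bump equal to $1$ on a $z$-proportional tube around $a(t)$ and $0$ outside a slightly larger tube. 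For $z=0$ this reduces to $f$ and the $z=0$ slice yields the full funnel section $A$; for every $z>0$ the field is locally independent of $x$ along $a(t)$, forcing uniqueness, so the $z>0$ slices collapse to the single point $c$. A symmetric construction on the top slab and a linear interpolation in the middle give exactly $A\times 0\cup c\times[0,3]\cup B\times 3$ and nothing more. This $z$-dependent uniqueness mechanism is precisely the ingredient your proposal is missing, and without it the proof does not close.
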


\begin{proof} Suppose that $A, B \subset  \mathbb{R}^{m}$ are funnel sections for $m$-dimensional ODEs, $A = K_{1}(0,p,f)$ and $B = K_{1}(0,q,g)$ for continuous $f,   g : \mathbb{R}^{m+1} \rightarrow  \mathbb{R}^{m}$ having compact support in $[0,1] \times  \mathbb{R}^{m}$, and $c \in A \cap B$.  It suffices to construct  a funnel section $K$  consisting of a line segment $L = c \times [0,3]$ and   copies of $A$, $B$ as shown in Figure~\ref{f.hsection}.
\begin{figure}[htbp]
\centering
\includegraphics[scale=.6]{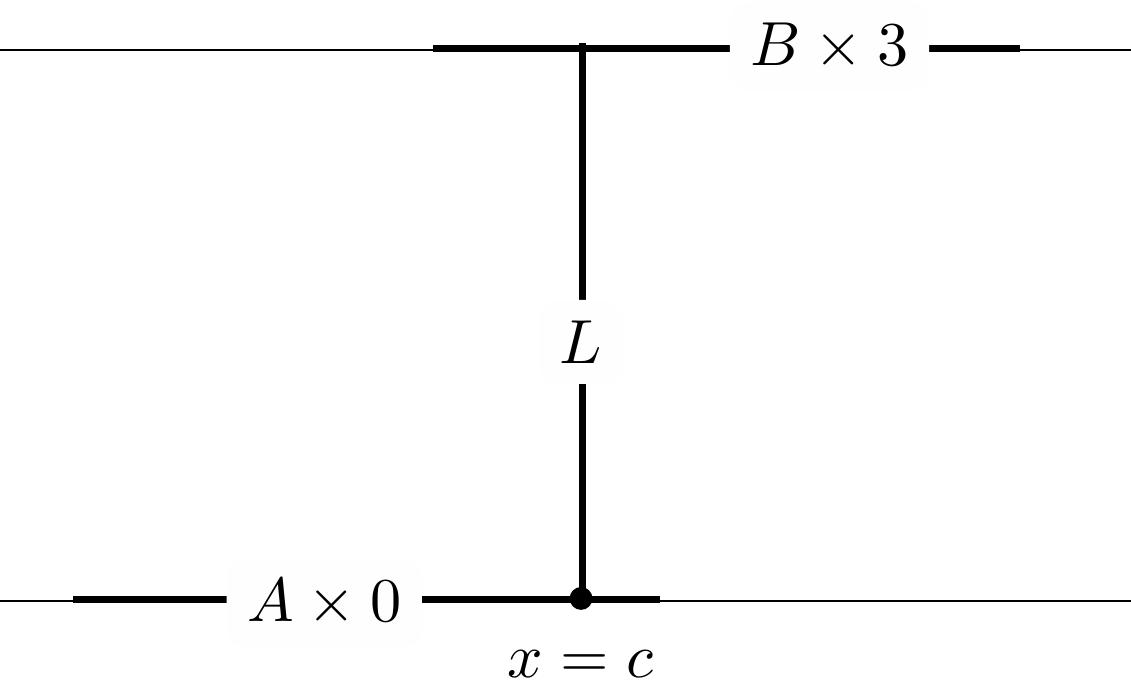}
\caption{The configuration of the desired funnel section is  $K = A \times  0 \; \cup \; L \; \cup \; B \times  3$.}
\label{f.hsection}
\end{figure}
For then Theorem~\ref{t.proj} implies   $\pi (K) = A\cup B$ is a funnel section of an $(m+1)$-dimensional ODE.

Without loss of generality we assume that the interior of the unit cube $Q = Q^{m+1}$ contains the supports of $f$, $g$,  and the funnels through $p$ and $q$.  

We write $(t,x,z) \in \mathbb{R} \times  \mathbb{R}^{m} \times  \mathbb{R}$ systematically.  It is easy to construct a continuous $h_{0} : \mathbb{R}^{m+2} \rightarrow  \mathbb{R}^{m+1}$ with compact support in $[-1,0] \times  \mathbb{R}^{m+1}$ such that
\begin{equation*}
\begin{split}
K_{0}(-1,p,h_{0}) &= L_{0} 
\\
&= [(0,p,0), (0,p, 1)] \cup [(0,p,1), (0,q, 2)] \cup [(0,q,2), (0,q, 3)]  \ .
\end{split}
\end{equation*}
$L_{0}$ is the broken line in the $t=0$ plane from $(0,p,0)$ to $(0,q,3)$ having vertices $(0,p,1)$ and $(0,q,2)$.  Then we will construct $h  $ so that $K_{1}(0,L_{0},h) = K$.  This gives $K_{1}(-1,p,h_{0}+h ) = K$.

First we fix $f$- and $g$-solutions $a(t)$ and $b(t)$ such that $a(0) = p$, $b(0) = q$, and $a(1) = c = b(1)$.  Then we construct $h $ on the three slabs $0 \leq z \leq 1$, $1 \leq  z \leq  2$, $2 \leq  z \leq  3$ as follows.  We think of $z$ as an ``external homotopy variable'' by requiring that the $z$-component of $h $ is identically zero.  This forces $h $-solutions to stay in $z=const.$ planes. For clarity we drop the zero $z$-component from the notation for $h $ and write $h (t,x,z)$ as an $m$-vector. Choose a bump function $\beta (t,x,z)$ on the bottom slab $0 \leq  z \leq  1$ such that 
\begin{itemize}

\item[(i)]
$\beta  = 1$ on the set $\{(t, x, z) \in  Q \times (0,1] : \abs{x-a(t)} \leq  2z\}$.
\item[(ii)]
$\beta =0$ on the set $\{(t, x, z) \in Q \times [0,1] : \abs{x-a(t)} \geq  4z\}$.
\item[(iii)]
$0 \leq  \beta \leq  1$ otherwise.
\end{itemize}
See Figure~\ref{f.betaa}, and note that $\beta  =1$ on the set $Q \times  1$.  
\begin{figure}[htbp]
\centering
\includegraphics[scale=.65]{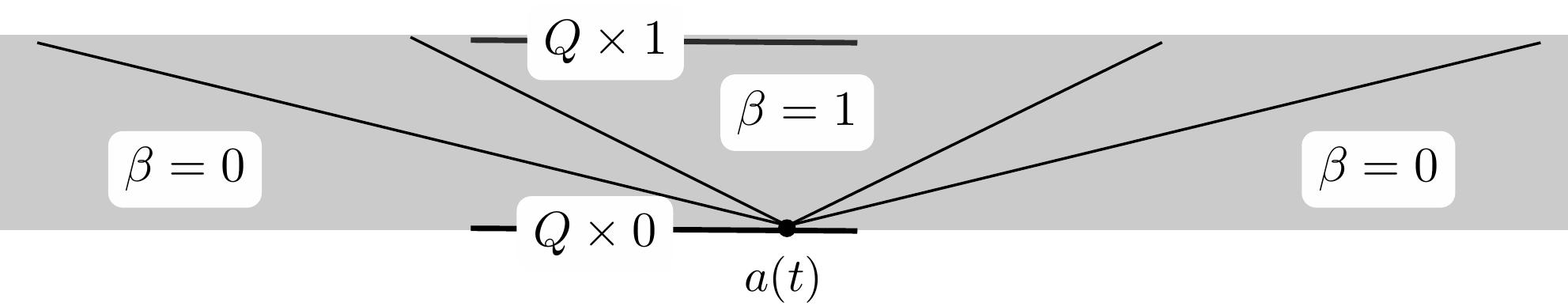}
\caption{$\beta $ is continuous except at $(t,a(t),0)$.}
\label{f.betaa}
\end{figure}

Although $\beta $ is discontinuous at $(t,a(t),0)$, the average
$$
h (t,x,z) = \beta (t,x,z)f(t,a(t)) + (1-\beta (t,x,z))f(t,x)
$$
is continuous on the whole slab.  The set $N_{z} = \{(t,x,z) : \abs{x-a(t)} < 2z\}$ is an open tubular neighborhood of   the curve $(t,a(t),z)$.  On $N_{z}$, $\beta  = 1$.  

We claim that for $0 < z \leq  1$, $a(t)$ is the unique solution of 
$$
x^{\prime} = h (t, x, z)  \quad x(0) = p \ .
$$
Let $x(t)$ be any solution of this equation.  It 
starts out in $N_{z}$, where   $\beta  = 1$ implies   $h (t,x,z) = f(t,a(t))$, a function that does not depend on $x$.  Thus, for small $t$ the solution is unique and   given by integration 
$$
x(t) = p + \int_{0}^{t} f(s,a(s)) \, ds   \ ,
$$
which is the same as $a(t)$.  Thus $x(t) = a(t)$ for small $t$.  Since $a(t)$   always lies in $N_{z}$, equality continues and we get uniqueness.  

In terms of funnels, this shows that 
$$
K_{1}(0,p\times [0,1],h ) = A \times 0 \;\;\cup \;\; c \times [0,1] . 
$$
The same construction on the top slab gives
$$
K_{1}(0, q \times [2,3], h ) = B\times 3 \;\; \cup \;\; c \times [2,3] \ .
$$
We fill in the middle slab by linear interpolation.    For $1 \leq  z \leq  2$ we set
$$
h (t,x,z) = (2-z)h (t,x,z=1) + (z-1) h (t,x,z=2)
$$
On the cube $Q \times [1,2]$, $h $ does not depend on $x$.  It is
$$
h (t,x,z) = (2-z)h (t,a(t),z=1) + (z-1) h (t,b(t),z=2) \ .
$$
Both curves $a(t)$ and $b(t)$ stay interior to the unit cube $Q$, and so does their convex combination 
$$
c(t) = (2-z)a(t) + (z-1) b(t) \ .
$$
Then $h (t,c(t),z) = (2-z)h (t,a(t),z=1) + (z-1) h (t,b(t),z=2)$ because $h(t,c(t),z)$ does not depend on $c(t)$, so 
\begin{equation*}
\begin{split}
c(t) &= (2-z)\Big(p + \int_{0}^{t} h (s,a(s),z=1)\, ds\Big) 
\\
&+ (z-1) \Big(q + \int_{0}^{t} h (s,b(s),z=2)\, ds\Big)
\\
&= (2-z)p + (z-1)q + \int_{0}^{t} h (s, c(s),z) \, ds
\end{split}
\end{equation*}
is the unique $h $-solution starting at the point $c(0) = (0,(2-z)p + (z-1)q,z)$ on the middle segment of $L_{0}$.  Since $a(1) = c = b(1)$,  we have   $c(1) = c$. In the middle slab  the trajectories through the broken segment   $L_{0}$  end at the vertical  segment $L$.  

Finally, we extend $h $ above and below $\mathbb{R}^{m+1} \times  [0,3]$ to give it compact support.  The net effect is that we get the funnel section $K$ as in Figure~\ref{f.hsection}, and Theorem~\ref{t.proj} completes the proof.
 \end{proof}


\begin{Rmk}
By induction Theorem~\ref{t.union} applies to finite unions.  But in  general, it is not true that a countable union of funnel section must be a funnel section.  For example   we can decompose the closed outward spiral into countably many arcs but it is not a funnel section.
\end{Rmk}





\section{Diffeotopies and Funnels}
\label{s.diffeotopies}

A \textbf{diffeotopy} is a smooth curve $\varphi (t)$  in the space of diffeomorphisms, starting  at the identity map when $t=0$.  We often write $\varphi (t)(x) = \varphi (t,x)$.  


Diffeotopies are generated by time-dependent ODEs and vice versa.  More precisely, if  $x(t, t_{0}, x_{0})$ solves the smooth time dependent ODE 
 $$
 x^{\prime} = f(t, x)  \qquad x(t_{0}) = x_{0}
 $$
 then
$\varphi (t,x_{0}) = x(t, 0, x_{0})$ is a diffeotopy.  Conversely, if $\varphi $ is a diffeotopy then $\varphi $ solves the ODE above with  $f(t,x) = \varphi ^{\prime}(t)(\varphi (t)^{-1}( x))$.  A diffeotopy $\varphi  $ defined on $[0,c)$ is said to have \textbf{bounded speed} if $\abs{\varphi ^{\prime}(t,x)}$ is uniformly bounded.  In this case
$$
\phi (x) = \lim_{t\rightarrow c} \varphi (t,x)
$$
exists and is continuous, although it need not   be a diffeomorphism.  Also, if $\varphi (t,x)$ is independent from $t$ for all $t \geq  c$ then the map $\phi $ defined by $x \mapsto \varphi (c,x) = \phi (x)$ is the \textbf{transfer map} of the diffeotopy.  It is the ultimate effect of the diffeotopy  on $x$.

 \begin{Thm}
 \label{t.diffeotopy}
Suppose that $A$ is a funnel section and there is a diffeotopy $\varphi $ of bounded speed on $[0,1)$ whose time one map   carries $A$ onto $B$.  Then $B$ is a funnel section.
 \end{Thm}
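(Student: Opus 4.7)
The plan is to patch the ODE defining $A$ with a second ODE on the slab $1 \leq t \leq 2$ whose flow realizes the diffeotopy $\varphi$ on $A$ and delivers the funnel section onto $B$ at time $t=2$.  By the Patching Lemma, I may write $A = K_{1}(0, p, f)$ with $f$ continuous and compactly supported in $[0,1] \times \mathbb{R}^{m}$; in particular, the $f$-funnel through $p$ stays inside a compact ball.

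The main difficulty is that the vector field $\varphi'(t)(\varphi(t)^{-1}(y))$ generating $\varphi$ need not extend continuously across $t=1$ once $\phi$ fails to be a diffeomorphism --- $\varphi(t)^{-1}$ can blow up in norm.  To overcome this I reparametrize time so the diffeotopy slows to a halt at the seam.  Choose a smooth strictly increasing $\sigma : [1,2) \rightarrow [0,1)$ with $\sigma(1) = 0$, $\sigma(s) \rightarrow 1$ and $\sigma'(s) \rightarrow 0$ as $s \rightarrow 2^{-}$; for instance $\sigma(s) = 1 - (2-s)^{2}$.  Put $\psi(t) = \varphi(\sigma(t))$ on $[1,2)$ and let
$$
g(t, y) \;=\; \psi'(t)(\psi(t)^{-1}(y)) \;=\; \sigma'(t)\,\varphi'(\sigma(t))(\varphi(\sigma(t))^{-1}(y))
$$
be the generating vector field of $\psi$.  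The bounded-speed hypothesis $\abs{\varphi'(t,x)} \leq M$ gives $\abs{g(t, y)} \leq \sigma'(t) M$, which tends to zero uniformly in $y$ as $t \rightarrow 2^{-}$.  Extending $g \equiv 0$ for $t \geq 2$ thus produces a continuous vector field on $\mathbb{R}^{m+1}$.  Since every $\psi$-trajectory emerging from $A$ stays within the $M$-neighborhood of $A$, multiplying $g$ by a smooth cutoff equal to $1$ on that neighborhood and $0$ outside a larger compact set gives $g$ compact support in $[1,2] \times \mathbb{R}^{m}$ without altering any trajectory through $A$.

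Set $h = f + g$; the supports are disjoint in $t$.  Solutions of $y' = h(t,y)$ from $p$ at $t=0$ arrive at points of $A$ at $t=1$, and for each $x_{0} \in A$ the unique continuation on $[1,2)$ is $y(t) = \psi(t)(x_{0}) = \varphi(\sigma(t))(x_{0})$, which converges to $\phi(x_{0}) \in B$ as $t \rightarrow 2^{-}$.  Because $\abs{g}$ vanishes at the seam, the extended curve $y(2) := \phi(x_{0})$ is a genuine continuous solution on the closed interval $[1,2]$.  Conversely, every $b \in B$ equals $\phi(x_{0})$ for some $x_{0} \in A$ and is hit at time $2$ by the corresponding trajectory.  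Hence $K_{2}(0, p, h) = \phi(A) = B$, exhibiting $B$ as a funnel section.
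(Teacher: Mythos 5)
Your proof follows the same strategy as the paper's: patch the ODE realizing $A$ with the ODE generating the diffeotopy, after reparametrizing the diffeotopy time so that the bounded-speed hypothesis forces the generating vector field to vanish where $\varphi$ may cease to be invertible. The main steps (bounded speed gives $\abs{g(t,y)}\leq \sigma'(t)M$, continuous extension by zero past $t=2$, a spatial cutoff using the fact that trajectories through $A$ stay in an $M$-neighborhood of the compact set $A$, and reading off $K_{2}(0,p,h)=\phi(A)=B$) are all sound and mirror the paper's argument closely.

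There is, however, a small gap at the \emph{other} seam. You only require $\sigma'\rightarrow 0$ as $s\rightarrow 2^{-}$, and your example $\sigma(s)=1-(2-s)^{2}$ has $\sigma'(1)=2$. Hence
$$
g(1,y)=\sigma'(1)\,\varphi'(0)\bigl(\varphi(0)^{-1}(y)\bigr)=2\,\varphi'(0)(y),
$$
which need not vanish, since a diffeotopy's initial velocity $\varphi'(0)$ is an arbitrary vector field. Because $f$ is continuous with compact support in $[0,1]\times\mathbb{R}^{m}$ (so $f(1,\cdot)=0$) and you implicitly set $g\equiv 0$ for $t<1$, the patched field $h=f+g$ then jumps at $t=1$, so $h$ is not a continuous ODE and the definition of funnel section does not apply. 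The fix is simply to require $\sigma'(1)=0$ as well, so that the generating field vanishes at \emph{both} ends of the slab; this is exactly what the paper's reparametrization $\tau(t)=t^{2}(2-t)^{2}$ (with $\tau'(0)=\tau'(1)=0$) achieves.
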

 \begin{proof}[\bf Proof]
 By assumption there is an ODE
$$
x^{\prime}=f(t,x)  \quad x(0) = p
$$
whose funnel has cross-section $A$ at time $1$.  By Proposition~2.4 of \cite{Pugh} we may assume that $f$ has compact support in $[0,1] \times  \mathbb{R}^{m}$.  The diffeotopy $\varphi $ gives a second  ODE, 
$$
x^{\prime} = g(t,x)
$$
whose solutions give a funnel from $0 \times  A$ to $1 \times  B$.  Since $\varphi $ has bounded speed, if we   reparameterize time as  $\tau (t) = t^{2}(2-t)^{2}$ then  the diffeotopy $\psi (t,y) = \varphi (\tau (t),y)$ has
$$
\abs{\psi ^{\prime}(t,y)} = \abs{\varphi ^{\prime}(\tau (t), y)}\tau ^{\prime}(t)  \leq M\tau ^{\prime}(t)
$$
where $M$ is the maximum speed of $\varphi $.  That is, $\psi $ is generated by an ODE which converges to zero as $t \rightarrow 1$.  
This lets us assume   $g : \mathbb{R}^{m+1} \rightarrow  \mathbb{R}^{m}$ is continuous  and has compact support in $[0,1] \times  \mathbb{R}^{m}$.   Set
$$
h(t,x) = f(t, x) + g(t-1,x) \ .
$$
Then $K_{2}(0,p,h) = B$.  
\end{proof}
%

\begin{proof}[\bf Proof of Theorems~\ref{t.pJordan} and \ref{t.pJordanL}]
Since infinite resistance implies nowhere smoothly pierceable, it suffices to prove Theorem~\ref{t.pJordanL}: \emph{there exist Jordan curves of infinite resistance, some of which are funnel sections.}   The first assertion is proved in Section~\ref{s.nowhere}.  It remains to prove that some Jordan curves of infinite resistance are funnel sections.   
By Theorem~\ref{t.diffeotopy} it is enough  to find a diffeotopy of bounded speed from the circle to some Jordan curve   of infinite resistance.  For the circle is a funnel section.

We fix a sequence $(t_{n})$ such that $0=t_{0} < t_{1} < \dots $ and $t_{n} \rightarrow  1$ as $n \rightarrow \infty$.  Then we  construct a sequence of smooth diffeotopies $\varphi _{n}$ on $S^{2}$ such that $\varphi _{n}$ is supported in the time interval $(t_{n}, t_{n+1})$.     The transfer map $\phi _{n}$ is a diffeomorphism $S^{2} \rightarrow  S^{2}$ and we   arrange things so that the composed transfer map $  \phi _{n} \circ  \cdots \circ  \phi _{0}$ converges to a homeomorphism sending the equator of $S^{2}$ to a Jordan curve of infinite resistance.  
The construction is by induction.  

First we make a general construction for any fixed smooth Jordan curve $J \subset S^{2}$ that separates the poles and has $r(J) > \alpha $.   Lemma~\ref{l.lsc} provides   a   $\delta  = \delta (J)$ such that if $J^{\prime}$ is a  Jordan curve   that separates the poles and has $d_{H}(J, J^{\prime}) < \delta $ then $r(J^{\prime}) > \alpha $.  Lemmas~\ref{l.zipper} and \ref{l.zippers} imply that there is a diffeotopy $\varphi $ such that
 \begin{itemize}

\item 
$\varphi $ is supported in a thin tubular neighborhood of $J$ and $\abs{\varphi ^{\prime}}$ is arbitrarily small.
\item 
The transfer map $\phi $ and its inverse $\phi ^{-1}$ are arbitrarily close to the identity map in the $C^{0}$ sense.
\item 
The smooth Jordan curve $\phi (J)$ separates the poles and $d_{H}(J, \phi (J)) < \delta (J)/2$.  
\item  
$r(\phi (J)) > \alpha  + 1$.

\end{itemize}

Start with $J_{0}$ equal to the equator of $S^{2}$.  It has $r(J_{0}) = \pi $.  The identity diffeotopy $\varphi _{0}$ has an identity transfer map $\phi _{0}$ and it sends the equator $J_{0}$ to itself, i.e., $J_{1} = \phi _{0}(J_{0}) = J_{0}$.  Trivially, $r(J_{1}) > 1$.

Next, applying the preceding construction to $J_{1}$, we find   a diffeotopy $\varphi _{1}$  supported on $(t_{1}, t_{2}) \times  N$ where $N$ is an equatorial band, such that the    transfer map $\phi _{1}$  carries $J_{1}$ to a smooth Jordan curve $J_{2} = \phi_{1} (J_{1})$.  Since the poles stay fixed during the diffeotopy, $J_{2}$ separates them. The construction permits 
\begin{itemize}

\item[(a$_{1}$)]
$\abs{\varphi _{1}^{\prime}} < 1$.
\item[(b$_{1}$)]
 $\abs{\phi _{1}(x) - x} < 1/2$ and $\abs{\phi _{1}^{-1}(x) - x} < 1/2$ for all $x \in S^{2}$.

\item[(c$_{1}$)]
$r(J_{2}) > 2$.  (This is trivial since the resistance is always $\geq  \pi $.)
\end{itemize}  

Inductively, assume we have defined $\varphi _{n-1}$ with time support in $(t_{n-1}, t_{n})$ and transfer map $\phi _{n-1}$.  Then $J_{n} = \phi _{n-1}(J_{n-1})$ is defined.  Working in a thin tubular neighborhood of $J_{n}$ we construct a diffeotopy $\varphi _{n}$ such that 
\begin{itemize}

\item[(a$_{n}$)]
 $\abs{\varphi _{n}^{\prime} }< 1/n$.

\item[(b$_{n}$)]
For all $x \in S^{2}$, the composed transfer maps and their inverses satisfy 
\begin{equation*}
\begin{split}
\abs{\phi _{n} \circ \phi _{n-1} \circ \cdots \circ \phi _{1}(x) - \phi _{n-1} \circ \cdots \circ \phi _{1}(x)} < \frac{1}{2^{n}}
\\
\abs{\phi _{1} ^{-1} \circ \cdots  \phi _{n-1}^{-1} \circ  \phi _{n}^{-1} (x) -   \phi _{1} ^{-1} \circ \cdots  \phi _{n-1}^{-1}(x) } < \frac{1}{2^{n}}
\end{split}
\end{equation*}

\item[(c$_{n}$)]

If $J_{n+1} = \phi _{n}(J_{n})$ then 
$$
d_{H}(J_{n}, J_{n+1}) <  \min \Big(\frac{\delta (J_{1})}{2^{n}}, \dots , \frac{\delta (J_{n})}{2 } \Big) \ .
$$
\end{itemize}

By (a$_{n}$) the diffeotopy $\bigcup \varphi _{n}$ has bounded speed on $0 \leq  t < 1$.  

 Consider   $\Phi _{n}= \phi _{n} \circ  \cdots \circ  \phi _{1}$.  By (b$_{n}$) we have
 $$
 \norm{\Phi _{n} - \Phi _{n-1}} < \frac{1}{2^{n}} \quad \textrm{and} \quad \norm{\Phi _{n}^{-1} - \Phi _{n-1}^{-1}} < \frac{1}{2^{n}} \ ,
 $$
so the sequence  $(\Phi _{n})$   is Cauchy in the space of homeomorphisms of $S^{2}$, and it converges uniformly to a homeomorphism $\Phi $ of $S^{2}$.  Let $J = \Phi (J_{0})$.  It is a Jordan curve in $S^{2}$.  Since the poles stay fixed under the diffeotopies, $J$ separates them.  

By  (c$_{n}$),
$$
d_{H}(J_{n},J) \leq \sum_{k=n}^{\infty} d_{H}(J_{k},J_{k+1}) 
< 
\sum_{k=1}^{\infty} \frac{\delta (J_{n})}{2^{k}}  =  \delta (J_{n}) \ ,
$$
which implies that $r(J) > n$.  Hence $r(J) = \infty$.  Since we arrived at $J$ by a funnel from $p$ to the equator, followed by a funnel from the equator to $J$,  $J$ is a funnel section.  
\end{proof}

\begin{Rmk}
The diffeotopy produced above ends with a homeomorphism of the  sphere  to itself and is therefore reversible. The reverse funnel from the Jordan curve leads back to the equator, $K_{0}(1\times J) = E$, and in the terminology of \cite{Pugh} we have a ``funnel cobordism'' between $E$ and $J$ .
\end{Rmk}

\begin{Rmk}

We do not know whether for every planar Jordan curve $J$ there is a diffeotopy of bounded speed that starts at the equator and ends at $J$.  If we did then we would know that every planar Jordan curve is a funnel section.
\end{Rmk}

The    proof of Theorem~\ref{t.pJordanL} above establishes the following approximation result.  
\begin{Thm}
\label{t.approximation}
A smooth Jordan curve can be approximated by other smooth Jordan curves having arbitrarily large resistance.  That is, if $h : S^{1} \rightarrow  \mathbb{C}$ sends $S^{1}$ diffeomorphically onto a Jordan curve $J$ and $\delta , L > 0$ are given, then there is an $h_{1} : S^{1} \rightarrow  \mathbb{C}$ sending $S^{1}$ diffeomorphically onto a smooth Jordan curve $J_{1}$ such that $\norm{h-h_{1}} < \delta  $ and $r(J_{1}) > L$.
\end{Thm}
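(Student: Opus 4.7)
The approach is to isolate the single resistance-raising step used in the inductive construction of Theorem~\ref{t.pJordanL} and to apply it not at the equator of $S^{2}$ but inside a tubular neighborhood of an arbitrary smooth Jordan curve.  View $J$ as a smooth curve on $S^{2}$ (adjoining the point at infinity if $J \subset \mathbb{C}$), and pick a smooth tubular neighborhood $N \subset S^{2}$ of $J$ together with a diffeomorphism $\Psi : N \rightarrow C = S^{1} \times (-1, 1)$ carrying $J$ to the equator $E$.  I would shrink $N$ until (i) $N$ is disjoint from small neighborhoods of the two poles used to define resistance, and (ii) $\Psi$ and $\Psi^{-1}$ have uniformly bounded derivatives, so that lengths on $N$ (spherical metric) and on $C$ (product metric) are comparable up to a constant $M > 0$.

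Next, set $L' = ML$ and fix $\epsilon > 0$ small relative to $\delta$ and $\norm{Dh}_{\infty}$.  Lemma~\ref{l.zippers} provides a diffeomorphism $\varphi$ of $C$, equal to the identity outside the $\epsilon$-neighborhood of $E$, with $\norm{\varphi - \mathrm{id}}_{C^{0}} < \epsilon$ and cylindrical resistance $r_{C}(\varphi(E)) > L'$.  Transfer $\varphi$ back to $S^{2}$ by setting $\tilde{\varphi} = \Psi^{-1} \circ \varphi \circ \Psi$ on $N$ and the identity on $S^{2} \setminus N$; this is a smooth diffeomorphism of $S^{2}$, and it is $O(\epsilon)$-close to the identity in $C^{0}$.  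Put $h_{1} = \tilde{\varphi} \circ h$ and $J_{1} = h_{1}(S^{1})$; then $h_{1}$ is a smooth embedding, $J_{1}$ is a smooth Jordan curve, and $\norm{h - h_{1}} < \delta$ provided $\epsilon$ was chosen sufficiently small.

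The heart of the argument, and the only real obstacle, is to convert the cylindrical bound $r_{C}(\varphi(E)) > L'$ into the spherical bound $r(J_{1}) > L$.  Given any polar path $\gamma$ for $J_{1}$ on $S^{2}$, it meets $J_{1} \subset N$ in exactly one point, so the component of $\gamma \cap N$ containing that point is a sub-arc of $\gamma$ that must enter and leave $N$ through the two \emph{opposite} boundary circles; otherwise $\gamma$ would have to cross $J_{1}$ a second time to reach the far pole, contradicting the single-crossing hypothesis.  The $\Psi$-image of this sub-arc is then a cylindrical polar path for $\varphi(E)$, so Lemma~\ref{l.zippers} gives cylindrical length $> L'$, and the bi-Lipschitz estimate yields spherical length $> L'/M = L$.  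Taking the infimum over $\gamma$ gives $r(J_{1}) > L$.  Making precise the topological claim that the relevant sub-arc crosses $N$ from one side to the other --- and absorbing the bi-Lipschitz constant $M$ into the initial choice of $L'$ --- is the only place where care is needed.
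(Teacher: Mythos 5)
Your proposal is correct and follows essentially the same route as the paper: transplant the zipper construction of Lemma~\ref{l.zippers} into a tubular neighborhood of $J$ via a diffeomorphism to the cylinder, and use that smallness of the modification plus the bi-Lipschitz equivalence gives both $\norm{h-h_1}<\delta$ and $r(J_1)>L$. You usefully make explicit two points the paper leaves tacit --- the parity argument showing that the portion of a polar path for $J_1$ lying in the tubular neighborhood must cross from one boundary circle to the other (hence maps to a cylindrical polar path), and the absorption of the bi-Lipschitz constant $M$ into the choice $L'=ML$.
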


\section{An Alexander Horned Sphere is a funnel section}

Consider instead of a Jordan curve, an Alexander Horned sphere $A$ \cite{Alexander}.  We claim   there is a diffeotopy $\varphi $ on $[0,1)\times \mathbb{R}^{3}$ of bounded speed that starts at the sphere $S^{2}$ and ends at $A$.  Theorem~\ref{t.diffeotopy} and the fact that $S^{2}$ is a funnel section imply that $A$ is a funnel section.    The time one map of the diffeotopy is continuous but it cannot be   a homeomorphism because the complementary domains of $S^{2}$ and $A$ are not homeomorphic.

The word ``an'' indicates that, as with Jordan curves, we do not know that every Alexander Horned Sphere is a funnel section, only that some of them are.  
Theorem~\ref{t.diffeotopy} is what may have been intended on page 283 of \cite{Pugh} by the phrase ``Using the methods of Section~4, it also follows that the usual Alexander Horned Sphere is a funnel section and so is the [closure of the] set it bounds.''

\begin{figure}[htbp]
\centering
\includegraphics[scale=.70]{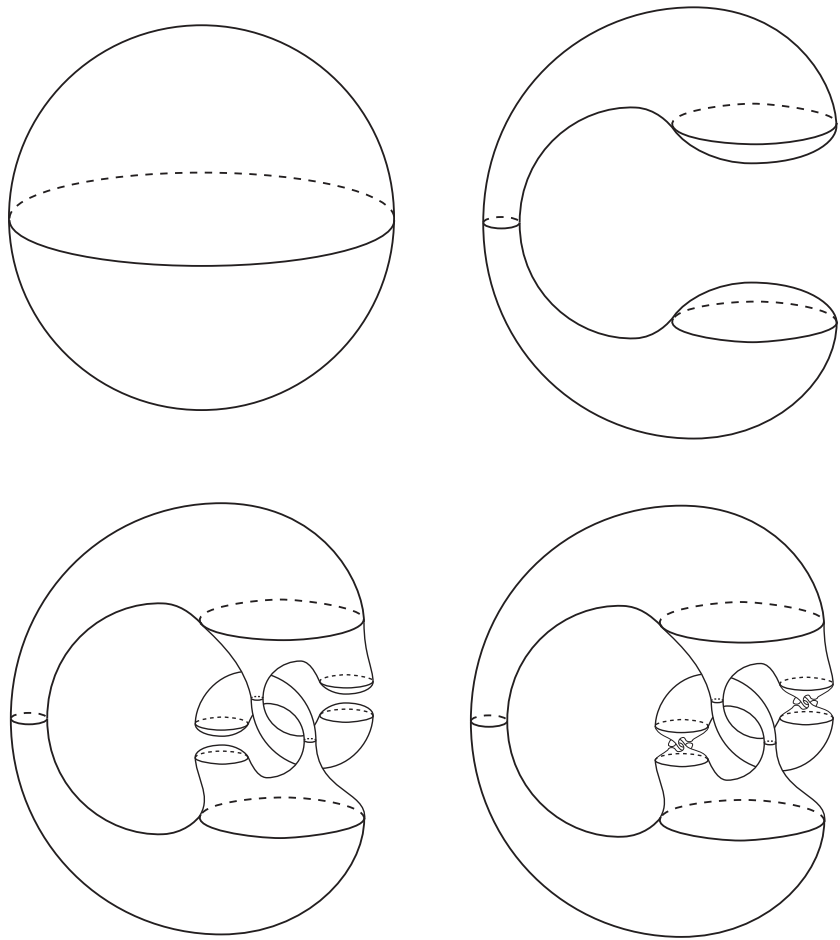}
\caption{The sphere's image at  time $0, 1/2, 3/4$ and $7/8$   of the diffeotopy.}
\label{horned sphere}
\end{figure}



As a preliminary step we easily construct a diffeotopy $\varphi _{0}$ supported in the time interval $(0,t_{1})$ that bends   the sphere into a banana shape so that the   polar caps at the north and south poles become supported on a pair of parallel discs of diameter $1$ and distance $1$ apart.  This  is shown in the second part of Figure~\ref{horned sphere}.   The resulting smooth sphere is $ S_{1} = \phi _{0}(S^{2})$ where $\phi _{0}$ is the transfer map of $\varphi _{0}$.       

Next we define a diffeotopy $\varphi _{1}$ on the time interval  $(1/2, 3/4)$ that fixes all points of $S_{1}$ in the complement of the two parallel caps and moves four disjoint discs in the caps to the four smaller caps shown in the third part of Figure~\ref{horned sphere}. The four discs have diameter $1/4$; the diffeotopy $\varphi _{1}$ moves them  to parallel caps of diameter $1/4$ and distance $1/4$ apart.     The resulting smooth sphere is $S_{2} = \phi  _{1}(S_{1})$.   

At the $n^{\textrm{th}}$   stage, we develop $2^{n-1}$  independent banana shapes where   the spatial dimensions are reduced by the factor $1/4 $ from the spatial dimensions at the previous stage, while the time interval is reduced by the factor $1/2$.  This is done merely by copying and scaling the   diffeotopy $\varphi _{1}$.  Since the spatial reduction dominates the time reduction the   speed of the combined diffeotopy $ \varphi = \bigcup \varphi _{n}$ tends to zero as $t\rightarrow 1$.    
Hence the whole diffeotopy on $[0, 1)$ starts at $S^{2}$, has bounded speed, and limits to our Alexander horned sphere as $t\rightarrow 1$.
As stated at the outset, since $S^{2}$ is a funnel section, so is $A$.  

The same construction done with the roles of inside and outside reversed shows that an Alexander Horned Sphere with inward curling horns is also a funnel section, as is an Alexander Horned Ball.

\section{A complete metric on the space of Jordan curves}
\label{s.complete}

The space of homeomorphisms of a compact metric space to itself has a natural complete metric, but the same does not seem to be true for the space of topological embeddings of a compact metric space into another metric space.  In the case of planar Jordan curves, we  use the Riemann mapping theorem to get such a metric. Many thanks to Andy Hammerlindl and Bill Thurston for elegant suggestions regarding the construction of such a metric.  

As above, let $\mathcal{J}_{0}$ denote the set of Jordan curves in $\widehat{\mathbb{C}}$ that separate the poles.  Given $J \in \mathcal{J}_{0}$, Caratheodory's Theorem supplies unique conformal bijections  
$$
R : \mathbb{D}  \rightarrow  \Omega  \qquad \widetilde{R} : \mathbb{D}  \rightarrow \widetilde{\Omega }
$$
such that 
\begin{itemize}

\item

$\Omega $ and $\widetilde{\Omega }$ are the connected components of $\widehat{\mathbb{C}} \setminus J$ containing the south pole and north pole respectively.

\item
$R(0)$ is the south pole and $\widetilde{R}(0)$ is the north pole.

\item
If $\pi $ denotes stereographic projection then $(\pi \circ R)^{\prime}(0) $ is real and positive.
\item
If $\alpha $ denotes inversion $z \mapsto  1/z$ then $(\pi \circ \alpha \circ \widetilde{R})^{\prime}(0) $ is real and positive.
\end{itemize}
We refer to $R$ and $\widetilde{R}$ as the \textbf{canonical Riemann maps} corresponding to $J \in \mathcal{J}_{0}$.

\begin{Defn}
For $J_{1}, J_{2} \in \mathcal{J}_{0}$, set
$$
d(J_{1}, J_{2}) = \norm{R_{1} - R_{2}} + \norm{\widetilde{R}_{1} - \widetilde{R}_{2}} \ ,
$$
where $R_{1}, \widetilde{R}_{1}$ and $R_{2}, \widetilde{R}_{2}$ are the canonical Riemann maps corresponding to $J_{1}$ and $J_{2}$.  (Recall that $\norm{F-G}$ is the $C^{0}$ distance between $F$ and $G$.) It is clear that $d$ is a metric on $\mathcal{J}_{0}$, and we call it the 
  \textbf{welding metric}.  For it deals with   pairs $R \sqcup \widetilde{R}$ welded by   $R^{-1} \circ  \widetilde{R}|_{\partial \mathbb{D} }$.
\end{Defn}

\begin{Thm}
\label{t. }
The welding metric is complete.
\end{Thm}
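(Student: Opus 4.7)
The plan is to take a Cauchy sequence $(J_n)$ in $(\mathcal{J}_0, d)$ and produce a limit Jordan curve $J \in \mathcal{J}_0$ whose canonical Riemann maps are the uniform limits of $R_n$ and $\widetilde{R}_n$. By the definition of $d$, the canonical Riemann maps $R_n, \widetilde{R}_n$, which extend to homeomorphisms $\overline{\mathbb{D}} \to \overline{\Omega_n}$ and $\overline{\mathbb{D}} \to \overline{\widetilde{\Omega}_n}$ by Caratheodory's theorem, are Cauchy in the complete metric space $C(\overline{\mathbb{D}}, \widehat{\mathbb{C}})$ with the $C^0$ distance in the spherical metric; so $R_n \to R$ and $\widetilde{R}_n \to \widetilde{R}$ uniformly on $\overline{\mathbb{D}}$ for continuous maps $R, \widetilde{R}$. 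Once these are shown to be the canonical Riemann maps of a Jordan curve $J$, completeness follows.

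First I would show that $R|_\mathbb{D}$ and $\widetilde{R}|_\mathbb{D}$ are univalent meromorphic. Uniform convergence on compacta of $\mathbb{D}$ makes each limit meromorphic, and the meromorphic form of Hurwitz's theorem forces each to be constant or univalent. The constant case is ruled out by the pole normalizations: if $R \equiv$ south pole, then $R_n(\overline{\mathbb{D}}) \to \{$south pole$\}$ in the Hausdorff metric, so $J_n = R_n(\partial\mathbb{D}) \to \{$south pole$\}$ and hence $\widetilde{R}|_{\partial\mathbb{D}} \equiv$ south pole; but then the open set $\widetilde{R}(\mathbb{D})$ would have boundary a single point and would have to equal $\widehat{\mathbb{C}} \setminus \{$south pole$\} \cong \mathbb{C}$, contradicting Liouville. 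Setting $\Omega = R(\mathbb{D})$, $\widetilde{\Omega} = \widetilde{R}(\mathbb{D})$, and $J = R(\partial\mathbb{D})$, standard facts about continuously extended univalent maps give $J = \partial\Omega$. Points of $\Omega_n$ and $\widetilde{\Omega}_n$ are separated by $J_n$, so a limiting argument applied to segments joining $R_n(w)$ to $\widetilde{R}_n(\widetilde{w})$ forces $\Omega \cap \widetilde{\Omega} = \emptyset$ and $\widetilde{R}(\partial\mathbb{D}) = J$; a preimage-subsequence argument in $\overline{\mathbb{D}}$ yields $\Omega \cup \widetilde{\Omega} \cup J = \widehat{\mathbb{C}}$. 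Since $\Omega$ is open, connected, and $\partial\Omega = J$, a clopen argument shows $\Omega$ coincides with its containing component of $\widehat{\mathbb{C}} \setminus J$; similarly for $\widetilde{\Omega}$. Hence $\widehat{\mathbb{C}} \setminus J$ has \emph{exactly} two components.

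The main obstacle is then to conclude that $J$ is actually a Jordan curve, i.e., that $R|_{\partial\mathbb{D}}$ is injective. If $R(e^{i\theta_1}) = R(e^{i\theta_2})$ for some $\theta_1 \neq \theta_2$ (chosen minimally, so the restriction of $R$ to the open arc between them is injective), the image of the closed arc from $e^{i\theta_1}$ to $e^{i\theta_2}$ is a simple closed loop in $J$, which by the Jordan curve theorem separates $\widehat{\mathbb{C}}$ into two disks; consequently $\widehat{\mathbb{C}} \setminus J$ has at least three components, a contradiction. Hence $R|_{\partial\mathbb{D}}$ and symmetrically $\widetilde{R}|_{\partial\mathbb{D}}$ are injective, so $J$ is a Jordan curve separating the poles. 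The normalizations pass to the limit trivially: $R(0)$ is the south pole and $\widetilde{R}(0)$ the north pole by continuity at $0$, and derivatives at $0$ converge via Cauchy's integral formula, with positivity preserved since univalence rules out a vanishing derivative. Thus $R$ and $\widetilde{R}$ are the canonical Riemann maps of $J$, and $d(J_n, J) \to 0$. The essential conceptual point is that the welding metric controls \emph{both sides} of each $J_n$ simultaneously: controlling only $R_n$ would permit Cauchy sequences converging to non-Jordan continua such as a figure eight (the outer Riemann map extends continuously while the inner Riemann map of a collapsing dumbbell degenerates), and it is simultaneous convergence of $\widetilde{R}_n$ that forbids this.
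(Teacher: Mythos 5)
Your opening steps coincide with the paper's: the Cauchy sequence of canonical Riemann maps converges uniformly, and Hurwitz plus Liouville (via the pole normalizations) rule out the constant case, so $R$ and $\widetilde R$ are univalent on $\D$. After that you diverge. You set out to prove that $\Omega=R(\D)$ and $\widetilde\Omega=\widetilde R(\D)$ are precisely the two components of $\widehat\C\setminus J$, each with full boundary $J$, and then to deduce injectivity of $R|_{\partial\D}$ from this purely topological picture. The paper instead constructs an auxiliary Jordan curve $H$ from the images $R([0,p])\cup R([0,q])$ of two \emph{radial} segments (so $H\subset\Omega\cup\{z\}$ rather than $H\subset J$), and then builds a polar arc $L=R([0,x])\cup\widetilde R([0,\widetilde x])$ through an arbitrary point $R(x)$ of the intervening boundary arc, concluding $R(A)=\{z\}$ and contradicting the boundary behaviour of a conformal map onto a Jordan domain. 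These are genuinely different routes to the injectivity claim.

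The difficulty is that your injectivity step, as written, has a gap. First, the ``minimal pair'' $\theta_1\ne\theta_2$ with $R(e^{i\theta_1})=R(e^{i\theta_2})$ need not exist: the infimum of $|\theta_1-\theta_2|$ over identified pairs could be $0$ without being attained (think of a Riemann map of a slit disc, where the slit endpoint is a limit of identified pairs), and you give no argument ruling this out. Second, even granting a minimal pair so that $H=R(\textrm{closed arc})\subset J$ is a Jordan curve, the inference ``$\widehat\C\setminus H$ has two components, hence $\widehat\C\setminus J$ has at least three'' is not valid; nothing forces the rest of $J$ to separate either side of $H$. What the hypotheses $\partial\Omega=\partial\widetilde\Omega=J$ and $H\subset J$ actually yield is $J=H$: each of $\Omega,\widetilde\Omega$ is connected, disjoint from $H$, and so lies in one of the two components $U,W$ of $\widehat\C\setminus H$; they cannot share a side (the other side would be a nonempty open subset of $J$, which has empty interior since $\Omega\cup\widetilde\Omega$ is dense); so $\Omega\subset U$, $\widetilde\Omega\subset W$, and then $J=\partial\Omega\subset U\cup H$ and $J=\partial\widetilde\Omega\subset W\cup H$ force $J\subset H$. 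So one does not reach the contradiction you state, but rather the conclusion that $J$ is the Jordan curve $H$, after which the assumed non-injectivity contradicts Carath\'eodory's theorem on $J$ itself; and this salvage still leaves the minimal-pair issue unresolved. The paper's radial construction of $H$ sidesteps both problems: it works for an arbitrary identified pair $p\ne q$ with no minimality needed, and the transversal arc $L$ produces the contradiction directly.
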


\begin{proof}[\bf Proof]
To show that $d$ is complete, let   $(J_{n})$ be a Cauchy sequence in $\mathcal{J}_{0}$.  The Riemann maps   $R_{n}$ and $\widetilde{R}_{n}$ corresponding to $J_{n}$ converge uniformly to continuous maps $R$ and $\widetilde{R}$  from the closed disc into $\widehat{\mathbb{C}}$.  Uniform convergence and $R_{n}(\partial \mathbb{D} ) = J_{n} = \widetilde{R}_{n}(\partial \mathbb{D} )$ imply that 
$$
R(\partial \mathbb{D} ) = J = \widetilde{R}(\partial \mathbb{D} ) \ .
$$
This shows that $J$ is a curve, but we don't yet know     it's a Jordan curve, nor that $J_{n}$ converges to it with respect to $d$.  


 By Hurwitz' Theorem, the restriction of $\widetilde{R}$ to the open disc is either constant or a \textbf{holeomorphism}  --  a holomorphic homeomorphism.  But if $\widetilde{R}$ is constant on the open disc then by continuity and the fact that each $\widetilde{R}_{n}$ sends the origin to the north pole, $\widetilde{R}(\overline{\mathbb{D}} )$ \emph{is} the north pole.  This implies that for $n$ large, the Jordan curve $J_{n}$ approximates the north pole, and its southern complementary region $\Omega _{n}$ approximates $ \pi ^{-1}(\mathbb{C})  $, the sphere minus the north pole.  Consequently, $\pi \circ R_{n} : \mathbb{D} \rightarrow \mathbb{C}$ converges to a holeomorphism $\mathbb{D} \rightarrow \mathbb{C} $, a contradiction to Liouville's Theorem.  Therefore   $\widetilde{R}$ sends $\mathbb{D} $ holeomorphically onto a region $\widetilde{\Omega} \subset \widehat{\mathbb{C}}$ containing the north pole, and similarly, $R$ sends $\mathbb{D} $ holeomorphically onto a region $\Omega \subset  \widehat{\mathbb{C}}$ containing the south pole.  
 
 We claim that $R$ and $\widetilde{R}$ are injective. Suppose not: there exist  $p, q \in \overline{\mathbb{D} }$ with $p \not= q$ and $R(p) = R(q)$.   We first show  that $p, q \in \partial \mathbb{D} $.  We know that $R$ sends the open disc $\mathbb{D} $ holeomorphically onto the region $\Omega $, so at least one of $p$ and $q$ belongs to $\partial \mathbb{D} $.  If $p \in \mathbb{D} $ and $q \in \partial \mathbb{D} $ then there are points $q^{\prime} \in \mathbb{D} $ near $q$ sent to points near $R(p)$, contradicting the fact that all points near $R(p)$ are $R$-images of points near $p$.  
 
%
%
%
%
%
%
%
%

Thus, if $R(p) = R(q)$ and $p \not=  q$ then $p, q \in \partial \mathbb{D} $.  Consider the radial segments $P = [0,p]$ and $Q = [0,q]$ in $\overline{\mathbb{D} }$.   Their $R$-images   are arcs from  the south pole to $z = R(p) = R(q)$.  Except for the south pole and $z$, the arcs are disjoint,  so their union is a Jordan curve $H$ contained in $\Omega \cup \{z\}$.    The south pole belongs to $H$ but the north pole does not.    Let $U$ be the complementary region of $H$ contained in $\Omega $.  (Equivalently, it is the complementary region   that does not contain the north pole.) Let $V =  R^{-1}(U) $.  It is contained in one of the two sectors, say $S$,   in $\mathbb{D} $ bounded by the radial segments  $P$, $Q$, and a  circular arc $A$ joining $p$ to $q$.   See Figure~\ref{f.HandL}.
\begin{figure}[htbp]
\centering
\includegraphics[scale=.65]{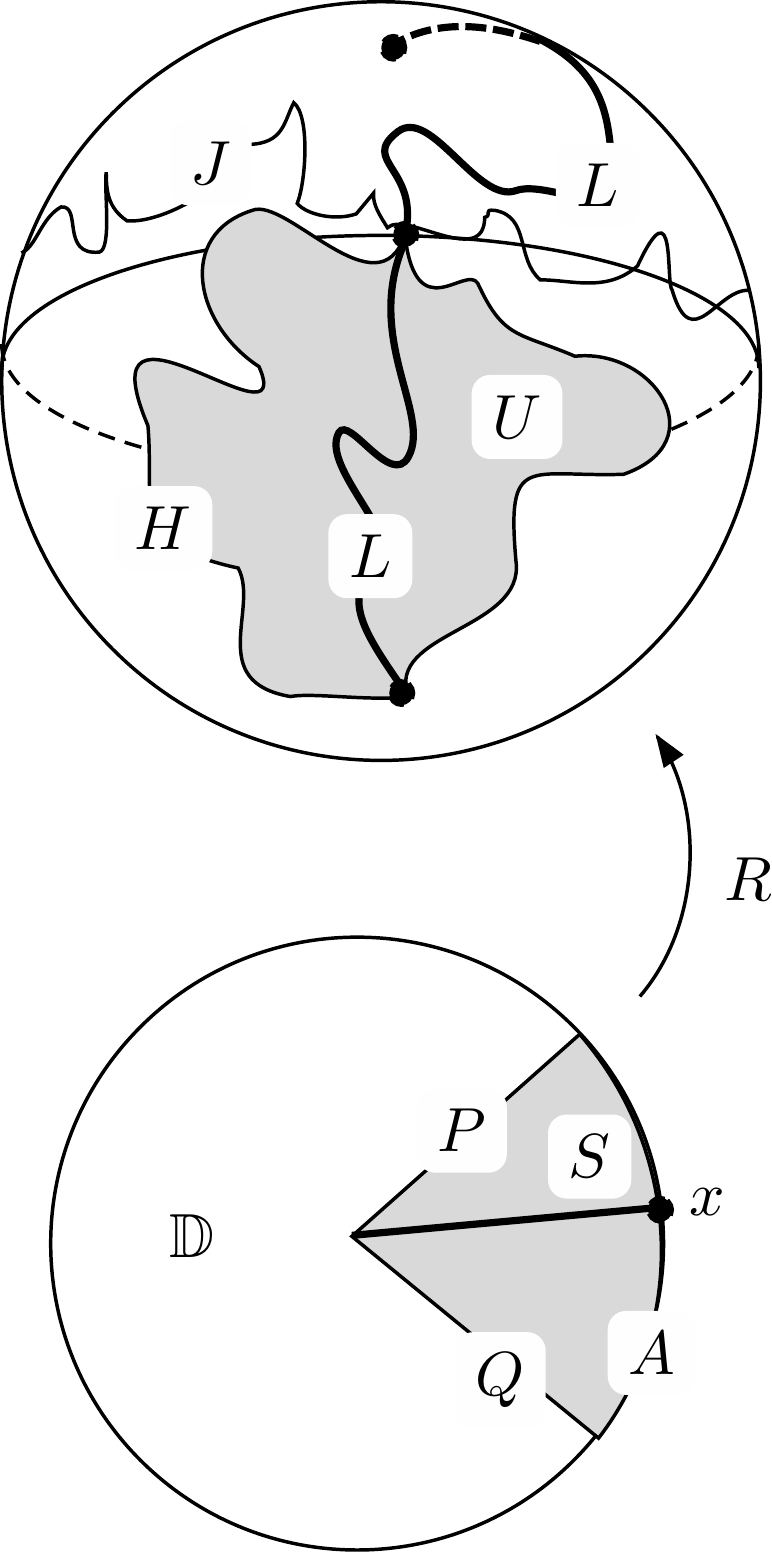}
\caption{The Riemann map $R$ sends $\mathbb{D} $ to the southern region bounded by $J$.  It also sends  $V \subset S$ to $U$,    $P \cup Q$ to the Jordan curve $H$, and $[0,x]$ to the southern part   of the arc $L$.}
\label{f.HandL}
\end{figure}

Fix any     $x\in A$.  We have $R(x) \in J$, so there is at least one $\widetilde{x} \in \partial \mathbb{D} $ with $\widetilde{R}(\widetilde{x}) = R(x)$.  Then $L  = R([0,x]) \cup \widetilde{R}([0,\widetilde{x}])$ is an arc in $S^{2}$ from the south pole to the north pole, and it  meets $J$ only at $R(x)$.   Since $L$ starts inside $H$ and ends outside $H$, it crosses $H$ somewhere, say at $z^{\prime}$.  Injectivity of $R$ implies that $z^{\prime}$  does not lie in the open edges of $H$, $R((0,p) \cup (0,q))$, so $z^{\prime}=z$, and   $R(A) = \{z\}$.  This is a contradiction to the general fact that a holeomorphism from a Jordan domain (such as $S$) to a Jordan domain (such as $U$) is always a homeomorphism from the boundary  of the first  to the boundary of the second.    

Thus, $R$ and $\widetilde{R}$ are   injective.  They send $\mathbb{D} $ to the disjoint regions $\Omega $ and $\widetilde{\Omega }$, and they  both send $\partial \mathbb{D} $ onto $J$.  Thus  $J$ is a Jordan curve.  Its   complementary regions,   $\Omega $ and $\widetilde{\Omega }$, contain the south and north poles respectively, so  $J \in \mathcal{J}_{0}$ and $J_{n} \rightarrow  J$.
\end{proof}

\begin{Rmk}
The set $\mathcal{J}$ of all Jordan curves $J \subset  S^{2}$ receives a natural \textbf{welding topology} as well.  As remarked at the end of Section~\ref{s.nowhere}, there is nothing special about the north and south pole.  For any pair of distinct points $p, q \in S^{2}$, the set of Jordan curves separating them, say $\mathcal{J}_{pq}$, has a metric topology given from the pull-back of the welding metric on $\mathcal{J}_{0}$ under a Mobius transformation of $\widehat{\mathbb{C}}$ sending the poles to $p$ and $q$.  The welding topology is locally unchanged if $p$ and $q$ are varied slightly.  Taking $p,q$ as distinct points in a countable dense subset of $\widehat{\mathbb{C}}$, we see that $\mathcal{J}$ is locally a complete metric space.  Hence $\mathcal{J}$ is a Baire space, so it makes sense to speak of the generic Jordan curve, and to ask what properties it has.
\end{Rmk}

\begin{Rmk}
There are other metrics that give the same topology to the space of Jordan curves, but the welding metric has the advantage of being complete.  As two examples, consider  
\begin{equation*}
\begin{split}
d_{1}(J_{1}, J_{2}) & = \norm{R_{1} - R_{2}}
\\
d_{2}(J_{1}, J_{2}) & = \inf   \norm{h_{1} - h_{2}} 
\end{split}
\end{equation*}
where the infimum is taken over all pairs of homeomorphisms $h_{1} : S^{1} \rightarrow  J_{1}$, $h_{2} : S^{1} \rightarrow  J_{2}$.  By   Rad\'o's Theorem (below) these metrics are topologically equivalent to the welding metric but are not metrically comparable to it.  
\end{Rmk}

\section{Generic Jordan curves}

\begin{Thm}
\label{t.generic}
The generic    Jordan curve is nowhere pierceable by paths of finite length.
\end{Thm}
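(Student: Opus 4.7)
The plan is to mirror the structure of Lemma~\ref{l.generic}, now working in the welding topology on $\mathcal{J}_{0}$ and then reducing the general case via the countable dense subset remark closing Section~\ref{s.complete}. For each integer $L \geq 1$ set $U_{L} = \{J \in \mathcal{J}_{0} : r(J) > L\}$. Since $(\mathcal{J}_{0}, d)$ is complete it is a Baire space, and if each $U_{L}$ contains a dense open subset then $\bigcap_{L \in \mathbb{N}} U_{L}$ is residual and consists of Jordan curves of infinite resistance.

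For openness I would exploit that welding convergence $J_{n} \to J$ means $R_{n} \to R$ and $\widetilde{R}_{n} \to \widetilde{R}$ uniformly on $\overline{\mathbb{D}}$, which forces $J_{n} \to J$ in the Hausdorff metric. Thus Lemma~\ref{l.lsc} applies at any smooth $J$: if such a $J$ has $r(J) > L$, then a welding neighborhood of $J$ lies in $U_{L}$. It therefore suffices to show that smooth $J$ with $r(J) > L$ are welding-dense in $\mathcal{J}_{0}$. For this I would approximate any given $J$ in two steps. First, the inward retractions $R_{s}(z) = R(sz)$ produce smooth analytic Jordan curves $J_{s} = R_{s}(\partial\mathbb{D})$ whose canonical Riemann maps converge uniformly on $\overline{\mathbb{D}}$ to $R$ and $\widetilde{R}$ as $s \to 1^{-}$ (by uniform continuity of $R$ on $\overline{\mathbb{D}}$, and by Carath\'eodory's kernel theorem for the nested exterior domains $\widetilde{\Omega}_{s} \downarrow \widetilde{\Omega}$, upgraded to the boundary via the equicontinuity of the smooth boundary parametrizations). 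Second, I apply a zipper diffeomorphism $\varphi$ from Lemma~\ref{l.zippers} supported in a very thin tubular neighborhood of $J_{s}$ to obtain a smooth $J_{s}' = \varphi(J_{s})$ with $r(J_{s}') > L$; for $\varphi$ sufficiently $C^{0}$-close to the identity, the two Jordan domains bounded by $J_{s}'$ are Hausdorff-close to those of $J_{s}$, and continuous dependence of the canonical Riemann map on smooth Jordan domains makes $d(J_{s}, J_{s}')$ arbitrarily small.

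To convert infinite resistance into non-pierceability, the remark after the Resistance Theorem extends from smooth to merely rectifiable piercing paths: given a finite-length arc $\nu$ piercing $J$, connect its two endpoints to the poles by short paths disjoint from $J$ (their respective complementary components of $J$ are open and contain the endpoints), producing a polar path of finite length and contradicting $r(J) = \infty$. To pass from $\mathcal{J}_{0}$ to all of $\mathcal{J}$, fix a countable dense subset $D \subset S^{2}$; for distinct $p, q \in D$, the open set $\mathcal{J}_{pq} \subset \mathcal{J}$ carries its own complete welding metric, and the argument above produces a residual subset $\mathcal{R}_{pq} \subset \mathcal{J}_{pq}$ of curves with infinite $(p,q)$-resistance. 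Since each $\mathcal{J}_{pq}$ is open in $\mathcal{J}$, extending $\mathcal{R}_{pq}$ by $\mathcal{J} \setminus \overline{\mathcal{J}_{pq}}$ yields a residual set in $\mathcal{J}$, and the countable intersection over $(p,q) \in D \times D$ is residual. Any rectifiable arc piercing such a $J$ has endpoints in the two complementary components of $J$; replacing the endpoints by nearby points of $D$ in those components produces a finite-length $(p,q)$-polar path, contradicting the chosen $(p,q)$-resistance.

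I expect the main obstacle to be the quantitative continuity statement buried in the density step: that a zipper diffeomorphism $\varphi$ which is $C^{0}$-close to the identity produces canonical Riemann maps of $\varphi(J_{s})$ that are $C^{0}$-close on $\overline{\mathbb{D}}$ to those of $J_{s}$. Carath\'eodory's kernel theorem supplies convergence on compacta of $\mathbb{D}$, and for smooth Jordan domains converging in Hausdorff metric this upgrades to uniform convergence on $\overline{\mathbb{D}}$; but making the estimate explicit enough to fit inside the welding-$\varepsilon$ required by the density step, while simultaneously preserving the zipper resistance bound from Lemma~\ref{l.zippers}, is the delicate technical point of the argument.
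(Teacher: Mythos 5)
Your overall structure matches the paper's: set $U_{L} = \{J : r(J) > L\}$, establish openness via Lemma~\ref{l.lsc} and the fact that the welding topology refines the Hausdorff topology, establish density by first retracting to a smooth curve and then applying a zipper perturbation, and finish by Baire category and the localization to $\mathcal{J}_{pq}$. However, there is a genuine gap at exactly the point you yourself flag: you do not have a working mechanism for converting $C^{0}$-closeness of Jordan curves (or Hausdorff-closeness of the domains they bound) into $C^{0}$-closeness on $\overline{\mathbb{D}}$ of the canonical Riemann maps, which is what the welding metric requires. Carath\'eodory's kernel theorem only gives locally uniform convergence on the open disc, and your proposed upgrade ``via equicontinuity of the smooth boundary parametrizations'' does not hold: the curves $J_{s}=R(s\,\partial\mathbb{D})$ are smooth for each $s<1$, but their parametrizations need not be equicontinuous as $s\to 1^{-}$ when $J$ itself is a rough Jordan curve, and likewise a zipper diffeomorphism that is merely $C^{0}$-close to the identity offers no control on the modulus of continuity of the resulting boundary parametrization.

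The tool the paper uses to close this gap is Rad\'o's Theorem (stated in Section~\ref{s.complete} from \cite{Rado}, see also \cite{Pommerenke}): if $h : S^{1}\to J$ is a boundary homeomorphism, then any homeomorphism $h_{1}:S^{1}\to J_{1}$ with $\norm{h-h_{1}}$ small forces $\norm{R-R_{1}}$ small, uniformly on $\overline{\mathbb{D}}$, and the same applied to the exterior gives $\norm{\widetilde{R}-\widetilde{R}_{1}}$ small. The paper's density argument is then one clean chain of $C^{0}$-estimates on parametrizations: take $h = R|_{\partial\mathbb{D}}$, pick $\rho$ close to $1$ so $\norm{h-h_{\rho}}<\delta/2$ (this only needs uniform continuity of $R$ on $\overline{\mathbb{D}}$, nothing about $J_{\rho}$'s geometry), then invoke Theorem~\ref{t.approximation} to produce a smooth $J_{1}$ with $\norm{h_{\rho}-h_{1}}<\delta/2$ and $r(J_{1})>L$, and finally let Rad\'o convert $\norm{h-h_{1}}<\delta$ into $d(J,J_{1})<\epsilon$ in one stroke. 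In other words, both your retraction step and your zipper step are handled simultaneously by comparing boundary parametrizations rather than domains; the ``delicate technical point'' you worry about is precisely what Rad\'o's Theorem is for, and without it the density half of your Baire argument does not go through.
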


We will use the following result of Rad\'o \cite{Rado} to prove this.  See also Chapter~2 of Pommerenke's book   \cite{Pommerenke}.

\begin{Rado}
\label{t.Rado}
Suppose that $J$ is a planar Jordan curve  enclosing the origin and $h : S^{1} \rightarrow  J$ is a homeomorphism.  Given $\epsilon  > 0$ there is a $\delta  > 0$ such that if $J_{1}$ is a Jordan curve and $h_{1} : S^{1} \rightarrow  J_{1}$ is a homeomorphism with $\norm{h-h_{1}} < \delta $ then $J_{1}$ encloses the origin and $\norm{R-R_{1}} < \epsilon $ where $R$ and $ R_{1} $ are the canonical Riemann maps for $J$ and $J_{1}$.    
\end{Rado}

\begin{proof}[\bf Proof of Theorem~\ref{t.generic}]  


Consider the set $\mathcal{J}_{0}(L) = \{ J \in \mathcal{J}_{0} : r(J) > L \}$.  We claim it is open and dense in $\mathcal{J}_{0}$ with respect to the welding metric $d$ defined in Section~\ref{s.complete}.  Openness follows from Lemma~\ref{l.lsc}, lower semicontinuity of the resistance function with respect to the Hausdorff metric topology on $\mathcal{J}_{0}$, and the fact that the latter topology is coarser than the welding topology. 

To check density, let $J \in \mathcal{J}_{0}$ and $\epsilon  > 0$ be given.  Let $R $ be the canonical Riemann map fixing the origin and  sending $\mathbb{D} $ onto the planar region $\Omega $ bounded by $J$.   Then $h = R|_{\partial \mathbb{D} } : S^{1} \rightarrow  J$ is a homeomorphism and Rad\'o's Theorem supplies a $\delta  > 0$ such that if $h_{1} : S^{1} \rightarrow  J_{1}$ is a homeomorphism and $\norm{h-h_{1}} < \delta $ then $\norm{R-R_{1}} < \epsilon /2 $.  Rad\'o's Theorem   applies equally  to the outer complementary region of $J$, and we infer that $\norm{h-h_{1}} < \delta $ implies $\norm{\widetilde{R}-\widetilde{R}_{1}} < \epsilon /2 $.

Continuity of $R$ implies that for   $\rho < 1$, the   map  
$$
h_{\rho }  : e^{i\theta } \mapsto  R(\rho e^{i\theta })
$$
is a diffeomorphism from $S^{1}$ onto the smooth Jordan curve $J_{\rho } $, which is the $R$-image of the circle of radius $\rho $.  If $\rho $ is near $1$ then    $\norm{h-h_{\rho }} < \delta /2$.      Since $J_{\rho }$ is smooth, Theorem~\ref{t.approximation} implies there is a Jordan curve $J_{1}$  and an $h_{1} : S^{1} \rightarrow \mathbb{C}$ sending $S^{1}$ diffeomorphically onto $J_{1}$ such that $\norm{h_{\rho } - h_{1}} < \delta /2$ and $r(J_{1}) > L$.  Thus $\norm{h-h_{1}} < \delta $, which implies 
$$
d(J , J_{1}) = \norm{R  - R_{1}} + \norm{\widetilde{R} - \widetilde{R}_{1}}  < \epsilon \ , 
$$
and confirms density of  $
\mathcal{J}_{0}(L)$   in $\mathcal{J}_{0}$.  The countable intersection $\bigcap_{L \in \mathbb{N}} \mathcal{J}_{0}(L)$ is residual, so the generic Jordan curve has infinite resistance:   it is nowhere pierceable by paths of finite length.  Since $\mathcal{J}$ is a Baire space, locally homemorphic to $\mathcal{J}_{0}$, the same holds for the generic  $ J \in \mathcal{J}$.
\end{proof}

\ \

Charles Pugh, Department of Mathematics, Northwestern University.

\emph{pugh@math.northwestern.edu}

\ \

Conan Wu, Department of Mathematics, Princeton University.

\emph{shuyunwu@math.princeton.edu}


\begin{thebibliography}{20}





\bibitem{Alexander} J. W. Alexander,  An Example of a Simply Connected Surface Bounding a Region which is not Simply Connected.  Proceedings of the National Academy of Sciences, \textbf{10(1)} 1924, 8-10.

\bibitem{Kneser}  H. Kneser,  Ueber die L\"{o}sung eines System gew\"{o}hnlicher Differentialgleichungen
das der Lipschitzchen Bedingung nicht gen\"{u}gt,  S.-B. Preuss. Akad. Wiss. Phys. Math. KI., 1923, 171-174.

\bibitem{Pommerenke} Ch. Pommerenke,  \emph{Boundary Behaviour of Conformal Maps}, Springer-Verlag, 1991.

\bibitem{Pugh} C. Pugh,  Funnel Sections,  Journal of Differential Equations, \textbf{19} 1975, 270-295.

\bibitem{Rado} T. Rad\'o, Sur la repr\'esentation conforme de domaines variables.  Acta Sci. Math. (Szeged) \textbf{1} (1923)  180-186.



\end{thebibliography}
\end{document}